\newif\ifdraft\draftfalse
\newif\ifcite\citefalse
\newif\ifblow\blowtrue
\ifcite\usepackage{showkeys}\else\usepackage[notcite,notref]{showkeys}\fi\fi
\newtheorem{proposition}[equation]{Proposition}
\newtheorem{theorem}[equation]{Theorem}
\newtheorem{lemma}[equation]{Lemma}
\theoremstyle{definition}
\theoremstyle{remark}
\newtheorem{example}[equation]{Example}
\numberwithin{equation}{section}
\def\bc{\begin{cases}}
\def\ec{\end{cases}}
\def\ol{\overline}
\def\a{\alpha}
\def\t{\tilde}
\def\ci{{\mathcal I}}
\def\cl{{\mathcal L}}
\def\cw{{\mathcal W}}
\def\bc{{\mathbb C}}
\def\br{{\mathbb R}}
\def\bz{{\mathbb Z}}
\def\er{\eqref}
\def\bz{\mathbb Z}
\def\br{\mathbb R}
\def\bc{\mathbb C}
\def\lp2{L_pH_{2p}}
\def\bean{\begin{eqnarray}}
\def\eean{\end{eqnarray}}
\def\bea{\begin{eqnarray*}}
\def\eea{\end{eqnarray*}}
\def\beq{\begin{equation}}
\def\eeq{\end{equation}}
\def\beq*{\begin{equation*}}
\def\eeq*{\end{equation*}}
\def\bal{\begin{align*}}
\def\eal{\end{align*}}
\def\baln{\begin{align}}
\def\ealn{\end{align}}
\def\beg{\begin{gather*}}
\def\eng{\end{gather*}}
\def\bqu{\begin{question}}
\def\equ{\end{question}}
\def\nm{\nonumber}
\def\ban{\begin{proof}[Answer]}
\def\ean{\end{proof}}
\def\ra{\Rightarrow}
\def\p{\partial}
\def\on{\operatorname}
\def\bqu{\begin{question}}
\def\equ{\end{question}}
\def\0110{\begin{matrix} 0 & 1\\1&0\end{matrix}}
\def\t{\tilde}
\def\fb{\mathfrak{b}}
\def\fg{\mathfrak{g}}
\def\fh{\mathfrak{h}}
\def\fl{\mathfrak{l}}
\def\fn{\mathfrak{n}}
\def\fs{\mathfrak{s}}
\def\ban{\begin{proof}[Answer]}
\def\ean{\end{proof}}
\def\wt{\widetilde}
\def\ben{\begin{equation}}
\def\een{\end{equation}}
\def\la{\langle}
\def\ra{\rangle}
\def\j1{{(j+1)}}
\def\ad{\text{ad}}
\begin{document}

\title[Toda field theories and 
standard differential systems]{Toda field theories and integral curves of standard differential systems}

\author{Zhaohu Nie}
\email{zhaohu.nie@usu.edu}

\address{Department of Mathematics and Statistics, Utah State University, Logan, UT 84322-3900}



\begin{abstract}
This paper establishes three relations between the Toda field theory associated to a simple Lie algebra and the integral curves of the standard differential system on the corresponding complete flag variety. 
The motivation comes from the viewpoint on the Toda field theories as Darboux integrable differential systems as developed in \cite{AFV}. 
First, we establish an isomorphism concerning regular functions on the jet space and on the unipotent subgroup in the setting of a simple Lie group. 
Using this result, we then show that in the sense of differential systems, after restricting one independent variable to a constant the Toda field theory becomes the system for integral curves of the standard differential system on a complete flag variety. Finally, we establish that, in terms of differential invariants, the Toda field theory is the quotient of the product of two such systems by a natural group action. 
\end{abstract}

\maketitle

\section{Introduction} 

	Let $\fg$ be a complex simple Lie algebra of rank $\ell$, and let  $(a_{ij})$ be its Cartan matrix. The (conformal) Toda field theory associated to $\fg$ is the following system of semilinear PDEs 	 
	\begin{equation}\label{toda}
	 u_{i, xy} = -\exp\Big(\sum_{j=1}^\ell a_{ij} u_j\Big),\quad 1\leq i\leq \ell,
	 \end{equation}
	where $x$ and $y$ are the independent variables and the $u_i$ are the unknown functions. 
	
	When $\fg=\fs\fl_2$, the corresponding Toda field theory is the ubiquitous Liouville equation. Toda field theories are fundamental examples of integrable systems and have been studied thoroughly in the literature (see the books \cites{LSbook, BBT}). They have zero curvature representations, and there have been
	detailed studies of their (local) solutions \cites{L, LS}. The solution structure has close relations to the representation theory of $\fg$ and its Lie group $G$ \cites{LS, KosToda}. In this regard, the author would also like to mention the recent classification results for the
	elliptic versions of the Toda systems \er{toda}, where one replaces $u_{i, xy}$ by $u_{i,z\bar z} = \frac{1}{4}\Delta u_i$ (see \cites{LWY, N15, LNW}). 
	In fact, the Liouville equation was first studied in this elliptic form in relation to conformal metrics of constant Gaussian curvature \cite{Liouville}. 
	
	In this paper, we are concerned with the aspects of Toda field theories from the viewpoint of differential systems. 
	By a differential system, we mean a Pfaffian system of 1-forms with constant rank on a manifold $M$ \cite{BCG3}. 
	In the dual viewpoint, the differential system is given by a subbundle of the tangent bundle $TM$, called a distribution, defined as 
	the kernel of the Pfaffian system. 
	We treat Toda field theories as Darboux integrable differential systems and 
	will establish the quotient structure from \cite{AFV}*{Theorem 1.4} for such systems. 
	We take a direct approach to establishing our results, and the proofs are crucially based on  
	the works of Kostant \cite{K3}, of Feigin and Frenkel \cites{FF-Inv, F5}, of Leznov and Saveliev
	\cites{LS, LSbook}, and of the author himself \cite{N14}. 
	
	A system of hyperbolic equations on the plane is called Darboux integrable if there is a  sufficient number of characteristic integrals, which are also called intermediate integrals in \cite{AFV}
	and integrals of motion in \cite{FF}. A \emph{characteristic integral}
	for the Toda field theory \er{toda} is a polynomial of the derivatives of the $u_i$ with respect to one independent variable whose derivative with respect to the other independent variable is zero if the $u_i$ are solutions. For example, for the Liouville
	equation $u_{xy} = -e^{2u}$, $I = u_{xx} - u_x^2$ is a characteristic integral on the $x$-side since $I_y=0$ for a solution $u$. Of course so is $u_{yy} - u_y^2$ on the $y$-side. 
	Such integrals have been intensively studied in the literature. We refer the reader to \cite{FF} for their cohomological interpretation and theoretical structure. 
	These characteristic integrals form the $\cw$-algebra and as such have been thoroughly studied in \cite{Feher1} and for more general gradings in \cite{Feher2}. 
	In \cite{N14}, the author  has directly established that for 
	the Toda field theory \er{toda} associated to a Lie algebra of rank $\ell$, there are $\ell$ basic characteristic integrals $I_j$ on one side.
	Furthermore, there is a completely algebraic and explicit 	algorithm for constructing them (see \er{bring up}). 
	This very algorithm will actually be important when we prove Theorem \ref{jets N} on the relation of jet spaces and unipotent subgroups. 
	In this connection, the author  would also like to mention the interesting application of characteristic integrals to the classification of solutions to elliptic 
	Toda systems with finite energy and with singular sources \cites{LWY, N15, LNW}. 
			
	Let us now briefly discuss the key results of this paper. One underlying theme of this paper is the relation of jet spaces and unipotent subgroups. 
	Such relations, especially for affine Lie algebras, lie at the heart of the soliton equations and can be said to be the reason for their existence. 
	This approach was thoroughly developed in a series of papers by Feigin, Frenkel and Enriquez \cites{FF, FF-Inv, EF} and was beautifully surveyed in \cite{F5}. 
	The following Theorem \ref{jets N} is an analogue of \cite{FF-Inv}*{Prop. 4} (see also \cite{F5}*{Theorem 1.1}) in the setting of 
	finite-dimensional simple Lie groups. 
	This theorem is proved in Section \ref{jets}. 
	
	We first introduce some notation. 
	Let $\fh$ be a fixed Cartan subalgebra of $\fg$. Let $\Delta = \Delta^+ \cup \Delta^-$ be a decomposition of the set of roots of $\fg$ into the sets of positive and 
	negative ones, and let $\{\a_1, \dots, \a_\ell\}$ be the set of positive simple roots. 
	Let $e_\a$ be a root vector in the root space $\fg_\a$ for $\a\in \Delta$. We will specify our normalization of the $e_\a$ when needed.  	
	Let $G$ be any connected Lie group integrating $\fg$, and let $B_+$ and $N_-$ be the Borel and lower unipotent subgroups corresponding to the Lie
	subalgebras $\fb_+ = \fh \oplus \oplus_{\a\in \Delta^+} \fg_\a$ and $\fn_- = \oplus_{\alpha\in \Delta^-} \fg_\a$. Then $N_-$ is diffeomorphic to a Euclidean space
	\cite{Knapp}. 
	
	The homogeneous space $G/B_+$ is called a complete flag variety, and it has a natural transitive left $G$-action by $g(k B_+) = gk B_+$ for $g\in G$ 
	and $k B_+\in G/B_+$. 
	It is well-known that
	the composition $N_-\hookrightarrow G\to G/B_+$ of the inclusion and the projection 
	is injective and its image is a big cell
	(see e.g. \cites{KosToda, LSbook, FF-Inv}). 
	We will mostly work with this big cell  in the local pictures and continue to denote it by $N_-$. 
	In particular, $N_-$ has a $\fg$-action induced from the above left $G$-action. 
	
	\begin{theorem}\label{jets N} Let $\bc[{\bf U}]$ be the ring of polynomials on the variables $u_{i, x}^{(n)}$ for $1\leq i\leq \ell,\ n\geq 1$
	equipped with an action of the derivative $\p_x$ which sends $u_{i,x}^{(n)}$ to $u_{i, x}^{(n+1)}$. 
	Let $\ci$ be the ideal of $\bc[{\bf U}]$ generated by the characteristic integrals of the Toda field theory \er{toda} on the  $x$-side 
	and their derivatives $\p_x^m I_j$ for $1\leq j\leq \ell,\ m\geq 0$. 
	Moreover, let $\bc[N_-]$ be the ring of regular functions on $N_-$. 
	Then we have a canonical isomorphism 
	$$\bc[{\bf U}]/\ci \cong \bc[N_-],$$
	under which 
	the derivative $\p_x$ on the left is identified with the derivation $\cl_e$ on the right for the infinitesimal action of the principal nilpotent element $e=\sum_{i=1}^\ell e_{\a_i} \in \fg$. 
	\end{theorem}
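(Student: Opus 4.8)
The plan is to write down an explicit comparison homomorphism $\Psi\colon\bc[{\bf U}]\to\bc[N_-]$, prove that it kills $\ci$, and deduce that it descends to the desired isomorphism. Since $\bc[{\bf U}]$ is the polynomial ring freely generated by the $u_{i,x}^{(n)}$, it is enough to prescribe the images of these variables. I attach to each simple index $i$ a distinguished regular function $w_i\in\bc[N_-]$ representing the Toda field $u_{i,x}$ on the big cell --- an explicit matrix coefficient in the fundamental representation $V_{\omega_i}$, which is polynomial on $N_-$ by the Leznov--Saveliev and Kostant realizations \cites{LS,LSbook,K3} --- and set $\Psi(u_{i,x}^{(n)})=\cl_e^{\,n-1}w_i$. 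With this choice the relation $\Psi\circ\p_x=\cl_e\circ\Psi$ holds on generators by construction and hence everywhere, so the identification of $\p_x$ with $\cl_e$ is built in from the start.

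Next I would fix the size of the left-hand side. By the explicit construction of the characteristic integrals in \cite{N14}, the basic integrals $I_j$ have a triangular leading-term structure in the jet variables, so that $\{\p_x^m I_j\}$ can be completed by suitable low-order jet variables to an algebraically independent generating set of $\bc[{\bf U}]$; the number of extra variables needed is $\sum_{j=1}^\ell m_j$, where $m_1,\dots,m_\ell$ are the exponents of $\fg$. Hence $\bc[{\bf U}]/\ci$ is a polynomial ring on those variables, in particular an integral domain of Krull dimension $\sum_{j=1}^\ell m_j=|\Delta^+|=\dim N_-$; the two equalities are the standard facts that the sum of the exponents equals the number of positive roots, which in turn is the dimension of the big cell.

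The structure of $\bc[N_-]$ under $\cl_e$ I would read off from Kostant's principal three-dimensional subalgebra \cite{K3}: under $\on{ad}e$ the nilradical $\fn_-$ breaks into $\ell$ strings, the $j$-th of length $m_j$, dually exhibiting $\bc[N_-]$ as generated by $\ell$ cyclic functions together with their iterated $\cl_e$-derivatives. Matching the lowest member of the $j$-th string with $w_j$, this shows at once that $\Psi$ is surjective, and that at the top of each string there is exactly one linear relation --- the linear leading part of $\Psi(I_j)$ --- coming from the fact that one further application of $\cl_e$ leaves the string.

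The crux is then to promote these linear relations to the full vanishing $\Psi(I_j)=0$, i.e. to check that each $I_j$, once its arguments are specialised to $\cl_e^{\,n-1}w_i$, becomes identically zero on $N_-$; since $\Psi$ intertwines the two derivations, $\ci\subseteq\ker\Psi$ then follows automatically for all $\p_x^m I_j$. This is the point where the nonlinear tail of the explicit recursion of \cite{N14} for the $I_j$ must be matched term by term against the representation-theoretic expressions for the $w_i$, and I expect it to be the main obstacle, as it is the one place reconciling the purely differential-algebraic definition of the $I_j$ with the geometry of $N_-$. Granting it, $\Psi$ descends to a surjection $\bar\Psi\colon\bc[{\bf U}]/\ci\to\bc[N_-]$ of finitely generated integral domains of the same Krull dimension $|\Delta^+|$; such a surjection has a height-zero and hence trivial kernel, so $\bar\Psi$ is an isomorphism, and together with the built-in intertwining $\p_x\leftrightarrow\cl_e$ this gives the theorem.
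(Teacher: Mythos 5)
Your overall architecture matches the paper's: define a homomorphism on the generators $u_{i,x}^{(n)}$ that intertwines $\p_x$ with $\cl_e$ by construction, show it annihilates $\ci$, and then compare sizes. The size comparison on the left is essentially Proposition \ref{polynm} (triangular leading terms of the $I_j$, $\sum_j m_j=\dim N_-$), and your endgame (a surjection of finitely generated domains of equal Krull dimension is injective) is a legitimate, if different, way to close the argument from the paper's character comparison.

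However, there is a genuine gap at exactly the point you flag as ``the main obstacle'': you never prove $\Psi(I_j)=0$, and you leave the functions $w_i$ unspecified beyond ``an explicit matrix coefficient representing $u_{i,x}$.'' These two defects are linked, because the vanishing only works for one particular choice. The paper takes $v_i=(\omega_i,K^{-1}eK)$ as in \er{coord}, i.e.\ the $\omega_i$-coefficient of the Cartan component of $K^{-1}eK$, and the vanishing then requires no term-by-term matching of the nonlinear tails at all: since the $I_j$ are \emph{defined} by the unique Drinfeld--Sokolov gauge transformation \er{bring up}, one runs the same recursion with $\p_x\rightsquigarrow\cl_e$ and ${\bf u}\rightsquigarrow{\bf v}$ as in \er{theory}, and the formula $K^{-1}\cl_a K=-(K^{-1}aK)_-$ from \er{reason} shows that $K$ itself is the gauge element and conjugates $\cl_e+e+{\bf v}$ to $\cl_e+e$ as in \er{simple}; uniqueness of the gauge transformation then forces every $\varphi(I_j)$ to vanish. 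Without identifying $w_i$ with this specific function and invoking the uniqueness in \er{bring up}, the central step of the theorem remains unproved. A secondary weak point: your claim that the $\ad_e$-string decomposition of $\fn_-$ ``shows at once'' that $\bc[N_-]$ is generated by the $w_j$ and their iterated $\cl_e$-derivatives does not follow, because $\cl_e$ is not a linear vector field in exponential coordinates; the paper needs the linear independence of the differentials $dv_{\sigma(i)}^{(k-1)}|_1$ together with the $\cl_{H_0}$-grading and a character count to get surjectivity, and your Krull-dimension finish still presupposes that surjectivity.
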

	
	By \cite{AFV}*{Theorem 1.4}, a Darboux integrable differential system is the quotient, in the sense of differential systems, of the product of two Pfaffian systems by the action of a common Lie group $G$. One of the main goals of this 
	paper is to explicitly demonstrate this for the Toda field theory \er{toda} in a direct way, and this is achieved in our Theorem \ref{main}. First we need to introduce the two Pfaffian systems in our result. 
	
	There exists a so-called grading element 
	$H_0\in \fh$ such that $\a_i(H_0) = 1, \ \forall 1\leq i\leq \ell$. 
	The principal grading of $\fg$ is 
	\begin{equation}\label{prin grad}
	\fg = \bigoplus_i \fg_i, \quad\text{where }\fg_i = \{ X\in \fg\,|\, [H_0, X]= i X\}.
	\end{equation}
	Identify the tangent space of $G/B_+$ at the point $o=1\cdot B_+$ with $\fg/\fb_+$, where $1\in G$ is the identity element.
	Define the distribution $D\subset T(G/B_+)$ as the $G$-invariant distribution equal to $\fg_{-1}$ mod $\fb_+$ at $o$. 
	Clearly $\fg_{-1} = \oplus_{i=1}^\ell \fg_{-\a_i}$ is the direct sum of negative simple root spaces.
	The Pfaffian system dual to the distribution $D$ is called the
	\emph{standard differential system} for the principal grading \cite{Yam}, and we call $D$ the \emph{standard distribution}.
	 For an interval $I\subset \br^1$, a curve $\Phi : I\to G/B_+$ is called an \emph{integral curve of the standard differential system} if 
	 \begin{equation}\label{stand sys}
	 \Phi'(y) = d\Phi\Big(\frac{\p}{\p y}\Big)\in D_{\Phi(y)}, \quad \forall y\in I
	 \end{equation}
	 (see \cite{DZ} for geometric studies of such curves). 
	 Again, we mostly work with $N_-\subset G/B_+$ which has a standard differential system naturally induced by restriction. 
	
	Our second main result concerns the differential system representations of Toda field theories.  
	The proof is contained in Section \ref{sect-rest} and uses Theorem \ref{jets N} in an essential way. 
	\begin{theorem}\label{Serre} In the differential system for the Toda field theory \er{toda}, if we restrict $x$ to a constant and discard jet prolongations 
	(see \er{with x-jets} and the discussion after it), then we obtain the differential system on the first jet space $J^1_y(N_-, D)$ of 
	the integral curves to the standard differential system on the unipotent Lie group $N_-$ in the following sense: 
	there is an isomorphism between the global vector field generators 
	for the corresponding distributions which respects all Lie brackets. 
	\end{theorem}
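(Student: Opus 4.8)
The plan is to write down explicit global generators for each of the two distributions and then produce a bracket‑preserving bijection between the two generating sets; the underlying identification of function rings is supplied by Theorem \ref{jets N}, and the identification of the $y$‑direction by the zero‑curvature representation of \er{toda}. Throughout I use that $\cl_{f_i}$, the fundamental vector field on $N_-$ of the simple lowering vector $f_i\in\fg_{-\a_i}$ for the left $G$‑action, span the standard distribution $D$.

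I first assemble the reduced Toda generators. On the system \er{with x-jets} each characteristic integral $I_j$ satisfies $\p_y I_j=0$, so $\p_y$ preserves the ideal $\ci$ and descends, by Theorem \ref{jets N}, to a derivation of $\bc[N_-]$; thus the pure $x$‑jets provide the configuration variable in $N_-$. The only further data produced by a single $y$‑differentiation are the $\ell$ quantities $v_i:=u_{i,xy}=-\exp(\sum_j a_{ij}u_j)$, and $\p_y v_i=v_i\sum_j a_{ij}u_{j,y}$ involves the first $y$‑jets $u_{j,y}$. Discarding jet prolongations is exactly the instruction to drop the $u_{j,y}$, so $\p_y v_i$ is left free. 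The reduced distribution is therefore generated by the horizontal field $\bar D_y=\p_y+\sum_i v_i X_i$ (the restriction of the total $y$‑derivative, in which the $v_i$ are retained as fiber coordinates and $X_i$ are the resulting fields on $N_-$) together with the $\ell$ verticals $\p_{v_i}$ coming from this freedom.

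On the other side, $J^1_y(N_-,D)$ has coordinates $(y,n,v_1,\dots,v_\ell)$ with $n\in N_-$ and tautological velocity $\Phi'(y)=\sum_i v_i\cl_{f_i}\in D_n$ as in \er{stand sys}; its prolonged Cartan distribution is generated by $H=\p_y+\sum_i v_i\cl_{f_i}$ and the verticals $\p_{v_i}$. I match the generating sets by $\bar D_y\mapsto H$ and $\p_{v_i}\mapsto\p_{v_i}$, and verify that all brackets correspond by computing the closure on each side. On $J^1_y(N_-,D)$ one has $[\p_{v_i},\p_{v_j}]=0$, $[H,\p_{v_i}]=-\cl_{f_i}$, and then $[H,\cl_{f_i}]=\sum_k v_k\cl_{[f_k,f_i]}$ and $[\cl_{f_i},\cl_{f_j}]=\cl_{[f_i,f_j]}$, so the generated Lie algebra is built from the abelian $\p_{v_i}$ and the lowering action $f\mapsto\cl_f$ of $\fn_-$ on $N_-$. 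On the Toda side the same computation gives $[\p_{v_i},\p_{v_j}]=0$ and $[\bar D_y,\p_{v_i}]=-X_i$, so the whole correspondence comes down to identifying the Lie algebra generated by $\{X_i\}$ with that generated by $\{\cl_{f_i}\}$ under $X_i\leftrightarrow\cl_{f_i}$. Here Theorem \ref{jets N} and the zero‑curvature pair enter: in the principal gradation the $y$‑component of the Lax connection lies in $\fg_{-1}=\oplus_i\fg_{-\a_i}$, so the $y$‑flow is realized through the lowering part of the very $\fg$‑action on $\bc[N_-]$ whose raising generator $e$ is $\p_x$. Consequently the $X_i$ obey the defining Serre relations of the simple generators of $\fn_-$, exactly as the $\cl_{f_i}$ do, and the map $X_i\leftrightarrow\cl_{f_i}$ extends to an isomorphism of the generated Lie algebras compatible with the $\p_{v_i}$.

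The main obstacle is precisely this last identification. It is \emph{not} true that $\bar D_y$ equals $H$ as a concrete field, nor that $X_i=\cl_{f_i}$ pointwise: already for rank $\ge2$ the Cartan fields $\cl_{h_j}$ and the lowering fields $\cl_{f_j}$ on $N_-$ are not proportional, so the naive guess $\p_y|_{\bc[N_-]}=\sum_i v_i\cl_{f_i}$ fails. What must instead be shown is the weaker but sufficient statement that the abstract brackets of the $X_i$ coincide with those of the $\cl_{f_i}$, i.e. that the Toda $y$‑flow realizes the simple negative‑root part of the principal $\fg$‑action compatibly with Theorem \ref{jets N}. I would establish this by reading the $y$‑evolution off the zero‑curvature pair and using the explicit algorithm behind Theorem \ref{jets N} (see \er{bring up}) to express $\bar D_y$ through the lowering operators; once the $X_i$ are recognized as the generators of an $\fn_-$‑action matching $f_i\mapsto\cl_{f_i}$, the isomorphism of differential systems, respecting all Lie brackets, follows.
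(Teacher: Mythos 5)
Your overall architecture coincides with the paper's: use Theorem \ref{jets N} to identify the pure $x$-jet variables with $\bc[N_-]$, exhibit explicit generators of the two distributions, and match them bracket-for-bracket. But the proposal has a genuine gap, and it sits exactly where you yourself flag it. The sentence ``I would establish this by reading the $y$-evolution off the zero-curvature pair and using the explicit algorithm behind Theorem \ref{jets N}'' is a promissory note for what is in fact the entire technical content of the theorem. In the paper one forms $V_j=[Y,U_j]=e^{\rho_j}\sum_{k\geq 1}B_j^{k-1}\p/\p u_{j,x}^{(k)}$, checks that $\wt{V_j}=e^{-\rho_j}V_j$ preserves the ideal $\ci$ and so descends to a derivation of $\bc[{\bf U}]/\ci\cong\bc[N_-]$, and then proves $\wt{V_j}=\wt{e_{-\a_j}^R}$ by importing Kostant's quasi-invariance relation $[\cl_e,e_{-\a_j}^R]=-(\a_j,K^{-1}eK)\,e_{-\a_j}^R$ from \er{kos imp}, transporting it through the isomorphism to $[\wt{e_{-\a_j}^R},\p_x]=\rho_{j,x}\wt{e_{-\a_j}^R}$, and observing that the resulting recursion for the coefficients of $\wt{e_{-\a_j}^R}$ is the same recursion \er{rec for B} that defines the $B_j^k$. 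None of this is automatic, and an abstract statement such as ``the $X_i$ obey the Serre relations of $\fn_-$'' would not suffice anyway: the theorem demands that the concrete generators correspond with all iterated brackets, so the structure \emph{functions} ($e^{\rho_j}$ on the Toda side, $\phi_j^0$ on the jet-space side) have to match, which is what the normalizations $U_j=\sum_i a^{ij}\p/\p u_i$ versus $\phi_j^0\,\p/\p\phi_j^0$ and the identities \er{more}, \er{the bars} are engineered to achieve.

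There is also a misidentification that blocks the route you sketch. The standard distribution $D$ is \emph{not} spanned by the fundamental vector fields $\cl_{f_i}$ of the left $G$-action. Being the $G$-invariant distribution equal to $\fg_{-1}$ mod $\fb_+$ at $o$, its value at $n\in N_-$ is spanned by $\frac{d}{dt}\big|_{t=0}\,n\exp(te_{-\a_i})B_+$, i.e.\ by the generators $e_{-\a_i}^R$ of \emph{right} multiplication (equivalently the left-invariant fields; this is why \er{def curve} reads $\Phi^{-1}\Phi_y\in\fg_{-1}$). By \er{reason} the left-action field $\cl_{e_{-\a_i}}$ corresponds after left translation to $-(K^{-1}e_{-\a_i}K)_-$, which leaves $\fg_{-1}$ already in rank $2$. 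Consequently $\p_x=\cl_e$ and the lowering fields you need do not sit inside a single $\fg$-action on $N_-$ --- one is a left-action field, the others are right-multiplication fields --- and their interaction is governed precisely by Kostant's relation above, not by the bracket $[\cl_e,\cl_{f_j}]=\pm\cl_{H_{\a_j}}$ that a single action would give. You correctly observe that the naive single-action guess fails, but the fix is to replace $\cl_{f_i}$ by $e_{-\a_i}^R$ and supply the quasi-invariance computation; as written, the proposal neither identifies the right target fields nor carries out the identification.
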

	
	The above theorem can be used to determine the two Pfaffian systems in \cite{AFV}*{Theorem 1.4} with the other one obtained 
	by restricting $y$ to a constant. 
	Therefore, it is natural to consider the quotient of the product of two such systems by the diagonal $G$-action. For differential systems with symmetry, 
	quotients are defined in terms of 
	differential invariants. 
	Our third main result studies such differential invariants, and it is proved in Section \ref{sect-invs}.
	
	\begin{theorem}\label{main}
	Let $J^1_y(N_-, D)$ denote the 1st jet space of the integral curves to the standard distribution $D$ on $N_-$ for the independent variable $y$. 
	For the other independent variable $x$, consider the similarly defined $J^1_x(N_+, \t D)$ (see the paragraph containing \er{def y-curve}). 	
		Then there exist $\ell$ differential invariants of the prolonged natural $G$-action on the product 
		$J^1_x(N_+, \t D)\times J^1_y(N_-, D)$,
		which satisfy the Toda field theory \er{toda}. 
	\end{theorem}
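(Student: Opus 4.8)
The plan is to realize the $\ell$ differential invariants as the Toda fields themselves, built from the two chiral tau functions attached to the fundamental representations in the Leznov--Saveliev/Kostant fashion, and to verify \er{toda} by a matrix-coefficient computation in which $\p_x$ and $\p_y$ act as the principal nilpotent Lie derivatives supplied by Theorems \ref{jets N} and \ref{Serre}. First I would fix coordinates and the prolonged action. A point of $J^1_y(N_-,D)$ is realized by a curve $a(y)\in N_-$ together with its velocity, and the integrality condition $\Phi'(y)\in D$ forces $a^{-1}a'=\sum_{i=1}^{\ell}\psi_i\,e_{-\a_i}\in\fg_{-1}$, so the $1$-jet is recorded by the point $a\in N_-$ and the vector $\sum_i\psi_i e_{-\a_i}$; symmetrically a point of $J^1_x(N_+,\t D)$ is a point $b\in N_+$ with $b^{-1}b'=\sum_i\phi_i e_{\a_i}\in\fg_1$. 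The left $G$-action on $N_-\subset G/B_+$ and on $N_+\subset G/B_-$ prolongs to the $1$-jets: $g\in G$ sends $a$ to the big-cell reduction of $ga$ and scales each velocity component $\psi_i$ by a torus character, and likewise on the $x$-side. By Theorems \ref{jets N} and \ref{Serre} the derivation along each chiral curve is the Lie derivative of the corresponding principal nilpotent element, namely $e=\sum_i e_{\a_i}$ on one factor and $\sum_i e_{-\a_i}$ on the other, which is what lets me differentiate group-theoretically.

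Next I would write the invariants. For each fundamental weight $\Lambda_i$ let $V_{\Lambda_i}$ be the irreducible representation with highest weight vector $|\Lambda_i\rangle$ and dual covector $\langle\Lambda_i|$, and set
\[
\tau_i=\langle\Lambda_i|\,b^{-1}a\,|\Lambda_i\rangle .
\]
Since $|\Lambda_i\rangle$ is fixed up to scale by $B_+$, since $\langle\Lambda_i|$ is a $B_-$ semi-invariant, and since $(gb)^{-1}(ga)=b^{-1}a$, the function $\tau_i$ is invariant under the diagonal $G$-action up to a chiral factor, i.e.\ up to multiplication by a function of $x$ times a function of $y$. I then define
\[
u_i=-\log\tau_i+\sum_{j=1}^{\ell}(a^{-1})_{ij}\big(\log\phi_j+\log\psi_j\big),
\]
where $(a^{-1})$ denotes the inverse Cartan matrix. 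The point is that under the prolonged action the chiral scaling of $\tau_i$ is exactly cancelled by the torus twists of the velocities $\phi_j,\psi_j$, so the $u_i$ descend to honest functions on the product that are invariant under the prolonged $G$-action and depend only on the $1$-jet data; these are the required differential invariants, and their invariance is the concrete incarnation for \er{toda} of the quotient structure of \cite{AFV}*{Theorem 1.4}.

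It then remains to verify \er{toda}. The chiral correction in $u_i$ is a sum of a function of $x$ and a function of $y$, hence is annihilated by $\p_x\p_y$, so it suffices to compute $\p_x\p_y\log\tau_i$. Differentiating $\tau_i$ with $\p_y a=a\big(\sum_j\psi_j e_{-\a_j}\big)$ and $\p_x b^{-1}=-\big(\sum_j\phi_j e_{\a_j}\big)b^{-1}$, and using the fundamental-weight selection rules $e_{-\a_j}|\Lambda_i\rangle=0$ and $\langle\Lambda_i|e_{\a_j}=0$ for $j\neq i$, only the $j=i$ terms survive at each step. Writing the Gauss decomposition $b^{-1}a=G_-G_0G_+$ with $G_\pm\in N_\pm$ and $G_0$ in the maximal torus gives $\tau_i=\exp\big(\Lambda_i(\log G_0)\big)$, and after normalizing the Chevalley generators the computation collapses to an identity of the form
\[
\p_x\p_y\log\tau_i=\phi_i\,\psi_i\,\exp\big(-\a_i(\log G_0)\big).
\]
Using $\log\tau_j=\Lambda_j(\log G_0)$ together with $\a_i=\sum_j a_{ij}\Lambda_j$ to rewrite $\exp(-\a_i(\log G_0))=\prod_j\tau_j^{-a_{ij}}$, and combining with the inverse-Cartan coefficients in $u_i$, I obtain $\p_x\p_y u_i=-\exp\big(\sum_j a_{ij}u_j\big)$, which is \er{toda}.

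I expect the main obstacle to be exactly the matrix-coefficient identity for $\p_x\p_y\log\tau_i$: this is the representation-theoretic heart of the argument, and it is where the normalization of the generators $e_{\a_i},e_{-\a_i}$ and a compatible sign convention for the semi-invariant sections must be fixed so that the proportionality constant is $1$ and the sign in \er{toda} comes out correctly. A second essential point is the clean verification of genuine invariance — that the chiral scaling of $\tau_i$ cancels against the torus twists of the velocities $\phi_j,\psi_j$ — together with the check that everything is well defined and nonvanishing on the Gauss-decomposable big cell of the product $J^1_x(N_+,\t D)\times J^1_y(N_-,D)$, which is where the tau functions live.
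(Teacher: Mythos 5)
Your proposal follows essentially the same route as the paper: the invariants are built from the highest-weight matrix coefficients $\xi_i=\la i|\Psi(x)\Phi(y)|i\ra$ of the fundamental representations, corrected by $\sum_j a^{ij}\log(\phi_j\psi_j)$, and the Toda equation is verified via the selection rules $e_{-\a_j}|i\ra=0$, $\la i|e_{\a_j}=0$ for $j\neq i$ together with the determinant (Jacobi) identity $\Delta_i=\prod_{j\neq i}\xi_j^{-a_{ij}}$, which is exactly what your Gauss-decomposition computation of $\p_x\p_y\log\tau_i$ amounts to. The only cosmetic differences are conventions ($b^{-1}$ versus $\Psi$, and which side carries $\phi$ versus $\psi$) and the fact that you argue invariance of the $u_i$ directly by cancelling the torus twists of the velocities against the chiral factors of $\tau_i$, whereas the paper first shows $-(\log\xi_i)_{xy}$ is invariant and then recovers invariance of the $u_j$ from the equation itself; both are valid.
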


	\medskip
	\noindent{\bf Acknowledgment.} I express my deep gratitude to Prof. Ian Anderson for introducing this topic to me, and for generously sharing many deep insights and 
	helpful discussions throughout the 
	course of the last several years. Prof. Anderson's several Maple programs are also very useful in carrying out the computations in this paper. 
	I thank an anonymous referee for thorough readings, for pointing out missing arguments and inaccuracies in earlier versions, 
	and for many constructive suggestions, 
	which significantly improved the quality of this paper in terms of both mathematics and exposition. 
	
	\section{Jet spaces and unipotent subgroups}\label{jets}
	
	In this section, we prove Theorem \ref{jets N}, which will be used in a crucial way 
	in proving Theorem \ref{Serre}. Theorem  \ref{jets N} 
	has its own interest and is the analogue of \cite{FF-Inv}*{Prop. 4}, also stated as \cite{F5}*{Theorem 1.1}, in the setting of simple Lie groups. 
	Interestingly in this finite-dimensional case, the characteristic integrals naturally come up. Now we recall the construction of such integrals from \cite{N14}. 
	
	Let us introduce the zero curvature representation of \er{toda}. 
	We normalize the root vectors $e_{\a_i}$ and $e_{-\a_i}$ for $1\leq i\leq \ell$ 
	such that $\a_i(H_{\a_i})=2$, 
	where $H_{\a_i} = [e_{\a_i}, e_{-\a_i}]$. Then by \cite{FH}*{p. 208}, 
	\begin{equation}\label{another Cartan}
	\a_i (H_{\a_j}) = a_{ij}, \quad 1\leq i, j\leq \ell.
	\end{equation} 
	Let 
\begin{equation}\label{players}
{\bf u}=\sum_{i=1}^\ell u_{i,x} H_{\a_i}, \quad
e=\sum_{i=1}^\ell e_{\a_i}, \quad
Y=\sum_{i=1}^\ell e^{\rho_i} e_{-\a_i},
\end{equation}
where throughout the paper we use the shorthand notation 
\begin{equation}\label{rhoi}
\rho_i=\sum_{j=1}^\ell a_{ij} u_j. 
\end{equation}
Then the Toda field theory \er{toda} is equivalent to the following zero curvature equation
\begin{equation}\label{zero curv}
[\p_x+e+{\bf u},\, \p_y-Y]=0.
\end{equation}

	With respect to the principal grading \er{prin grad} of $\fg$, let $\fs$ be a homogeneous complement of $[e,\fg]$ in $\fg$, that is, 
\begin{equation}\label{split}
\fg\cong \fs\oplus [e,\fg].
\end{equation}
Then by \cite{K2}, $\fs\subset \fn_-$ and $\dim \fs=\ell$. We call $\fs$ a Kostant slice, and let $\{s_j\}_{j=1}^\ell$ be a homogeneous basis of $\fs$
with nonincreasing principal gradings $-m_j$. The $m_j$ are called the \emph{exponents} of the Lie algebra $\fg$. 

By \cite{DS} and \cite{N14}*{Remark 2.1} (see also the proof of Proposition \ref{polynm}), we can bring the first element in \er{zero curv} into its Drinfeld-Sokolov gauge in a unique way. That is, there exists a unique element $M \in N_-$ (whose entries are differential polynomials of the $u_i$) such that 
\begin{equation}\label{bring up}
M (\p_x+e+{\bf u}) M^{-1}=\p_x + e + {\bf I},\quad {\bf I}=\sum_{j=1}^\ell I_j s_j\in \fs, 
\end{equation}
where the $I_j$ are differential polynomials of the $u_i$. 
The uniqueness of $M$ is easily proved by induction on the principal grading \er{prin grad} using the fact 
that 
	\begin{equation*}\label{kern info}
	\ker \on{ad}_e \,\cap\, (\fh \oplus \fn_-) = 0.
	\end{equation*}

	Then Theorem 2.1 in \cite{N14} proves directly that the $I_j$ are the basic characteristic integrals of the Toda field theory \er{toda} on the $x$-side. See also \cite{FF}*{Prop. 2.4.7 and \S 2.4.1}. 
	
	We first show the following. 
	\begin{proposition}\label{polynm}
	There exists a set of generators for 
	the ring $\bc[{\bf U}]/\ci$ 
	whose cardinality is $\dim N_-$. 
	\end{proposition}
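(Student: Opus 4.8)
The plan is to produce the generating set explicitly by solving the defining relations of $\ci$ for the high-order jet variables, and then to count what is left over. First I would equip $\bc[{\bf U}]$ with the grading in which $u_{i,x}^{(n)}$ has weight $n$; then $\p_x$ raises weight by one and the ideal $\ci$ is homogeneous. A scaling argument (combining $x\mapsto\lambda x$ with conjugation by $\lambda^{-H_0}$, under which $\p_x+e+{\bf u}$ is equivariant by the principal grading \er{prin grad}) together with the uniqueness in \er{bring up} shows that each characteristic integral $I_j$ is homogeneous of weight $m_j+1$ and vanishes at the origin. Since the only degree-one monomials of weight $m_j+1$ are the variables $u_{i,x}^{(m_j+1)}$, this forces
\begin{equation*}
I_j=\sum_{i=1}^\ell c_{ji}\,u_{i,x}^{(m_j+1)}+R_j,\qquad C=(c_{ji})\in\bc^{\ell\times\ell},
\end{equation*}
where $R_j$ collects the remaining weight-$(m_j+1)$ terms, each of which is nonlinear and hence involves only jet variables of order at most $m_j$.

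The heart of the argument is the claim that the matrix $C$ is invertible, and I expect this to be the main obstacle. I would extract it from the linearization of the Drinfeld--Sokolov gauge \er{bring up}. Writing $M=\exp(m)$ with $m\in\fn_-$ and keeping only the part linear in ${\bf u}=\sum_i u_{i,x}H_{\a_i}$, the relation \er{bring up} becomes ${\bf u}+[m,e]-\p_x m={\bf I}$. Decomposing this by principal degree and using that $\on{ad}_e\colon\fg_{-1}\to\fg_0$ is an isomorphism (which is exactly $\ker\on{ad}_e\cap(\fh\oplus\fn_-)=0$ together with $\dim\fg_{-1}=\dim\fg_0=\ell$), one solves recursively for the homogeneous components of $m$ and for ${\bf I}=\sum_j I_j s_j$. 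Tracking the most-differentiated linear contribution at each stage expresses the top-weight part of $I_j$ as a fixed invertible linear operator applied to the vector $(u_{1,x}^{(m_j+1)},\dots,u_{\ell,x}^{(m_j+1)})$; assembling these over $j$ identifies $C$, in the bases $\{H_{\a_i}\}$ of $\fh$ and $\{s_j\}$ of $\fs$, with the matrix of a linear map $\fh\to\fs$ built from $\on{ad}_e^{-1}$ on successive graded pieces and the projection onto the Kostant slice $\fs$ along $[e,\fg]$ from \er{split}. As $\dim\fh=\dim\fs=\ell$, verifying that this composite is an isomorphism from the explicit recursion is precisely the step that uses the fine structure of the slice.

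Granting that $C$ is invertible, I would finish by a triangular elimination, by induction on the weight $d$. The generators of $\ci$ of top weight $d$ are the derivatives $\p_x^{\,d-m_j-1}I_j$ for the indices $j$ with $m_j\le d-1$; since $R_j$ involves only jet orders at most $m_j$, differentiation gives $\p_x^{\,d-m_j-1}I_j=\sum_i c_{ji}\,u_{i,x}^{(d)}+(\text{terms of jet order}\le d-1)$. Because any collection of rows of the invertible matrix $C$ is linearly independent, these $N(d):=\#\{j:m_j\le d-1\}$ relations solve for $N(d)$ of the weight-$d$ variables $u_{i,x}^{(d)}$ in terms of the remaining $\ell-N(d)$ of them and variables of strictly lower order. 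By the inductive hypothesis every variable is then congruent modulo $\ci$ to a polynomial in the distinguished ``free'' variables, so these free variables generate $\bc[{\bf U}]/\ci$; their total number is
\begin{equation*}
\sum_{d\ge1}\big(\ell-N(d)\big)=\sum_{d\ge1}\#\{j:m_j\ge d\}=\sum_{j=1}^\ell m_j=|\Delta^+|=\dim N_-,
\end{equation*}
using that the exponents of $\fg$ sum to the number of positive roots. This exhibits a generating set of the required cardinality and completes the proof.
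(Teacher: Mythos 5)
Your overall strategy is the same as the paper's---extract the leading linear term of each $I_j$ from the Drinfeld--Sokolov recursion \er{bring up}, eliminate the high-order jet variables weight by weight, and count the survivors using $\sum_j m_j = |\Delta^+| = \dim N_-$---and your homogeneity argument and final count are correct. But the step you yourself flag as ``the main obstacle,'' the invertibility of the leading-coefficient matrix $C=(c_{ji})$, is never actually established: you describe $C$ as the matrix of a composite of $\on{ad}_e^{-1}$'s and projections onto $\fs$ and then write ``granting that $C$ is invertible.'' That is the one nontrivial input of the whole proposition, so as written the proof has a genuine gap. Everything downstream (the triangular elimination and the count $\sum_{d\ge 1}(\ell-N(d))=\sum_j m_j$) is fine, but it all hinges on this unproved nondegeneracy.

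The gap is closable, and the paper closes it by reversing the logic: instead of deriving $C$ from the recursion and then trying to invert it, one \emph{defines} $c_{ji}$ by expanding $H_{\a_i}=\sum_j c_{ji}\,s_j^{m_j}$, where $s_j^k=(-\on{ad}_e)^k s_j$. By Kostant's theorem the $s_j^k$ form a basis of $\fg$, so $\{s_j^{m_j}\}_{j=1}^\ell$ is a basis of $\fh$ and $(c_{ji})$ is nondegenerate by construction. The recursion then only has to confirm that these same coefficients survive to the leading terms of the $I_j$, which is immediate from the structure of the $s_j^k$: for $1\le k\le m_j$ one has $s_j^k\in[e,\fg]$, so the component $c_{ji}u_{i,x}^{(k+1)}s_j^{m_j-k}$ sheds no $\fs$-part at intermediate degrees and simply picks up one more $x$-derivative at each step, until it lands on $s_j^0=s_j\in\fs$ at degree $-m_j$ and contributes $\sum_i c_{ji}u_{i,x}^{(m_j+1)}$ to $I_j$, which is \er{lot}. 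If you want to salvage your own formulation, verifying that your composite map $\fh\to\fs$ is an isomorphism amounts to exactly this change of basis, so you should either adopt it or supply an independent proof; without it the proposition is not proved.
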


	
	\begin{proof}
	For a differential monomial in the $u_i$, we call by its \emph{degree} the sum of the orders of differentiation multiplied by the algebraic degrees of the corresponding factors. 
	For example $I = u_{xx} - u_{x}^2$ for the Liouville equation has a homogeneous degree $2$. 
	According to \cite{FF}*{Prop. 2.4.7}, the degrees $d_j$ of the homogeneous characteristic integrals $I_j$ are 
	$d_j = m_j + 1$, and this can also be seen from the algorithm 
	\er{bring up}. 
	
	Define 
	$$
	s_j^k = (-\ad_e)^k s_j = [\cdots[s_j, \underbrace{e], \cdots, e]}_k,\quad 0\leq k\leq 2m_j,\ 1\leq j\leq \ell. 
	$$
	It is known from \cite{K2} that the above $s_j^k$ form a basis of $\fg$. 
	Hence $\{H_{\a_i}\}_{i=1}^\ell$ and $\{s_j^{m_j}\}_{j=1}^\ell$ are two bases of $\fh$. We define the matrix $(c_{ji})$ by 
	$$
	H_{\a_i} = \sum_{j=1}^\ell c_{ji} s_j^{m_j}, \quad 1\leq i\leq \ell.
	$$
	Clearly $(c_{ji})$ is nondegenerate. 
	Now we show that 
	\begin{equation}\label{lot}
	I_j = \sum_{i=1}^\ell c_{ji} u_{i, x}^{(d_j)} + l.o.t.,
	\end{equation}
	where l.o.t. stands for terms which are products of lower order derivatives. 
	
	
	We will use this opportunity first to give more details on the existence and the uniqueness of $M$ in \er{bring up}, which satisfies
\begin{equation}\label{bring out}
-\partial_x M \cdot M^{-1} + M (e + {\bf u}) M^{-1} = e + {\bf I}.
\end{equation}
It is well-known that the exponential map $\exp: \fn_- \to N_-$ is a diffeomorphism \cite{Knapp}*{Cor. 1.126, Thm. 6.46}. Using the so-called coordinates of the second kind \cite{Knapp}*{p. 76}, write 
\begin{equation}\label{write g}
M = e^{a_1}\cdots e^{a_{m_\ell}}, \quad a_i\in \fg_{-i},\ i=1,\dots,m_\ell,
\end{equation}
where $m_\ell$ is the largest exponent of $\fg$. 
We will uniquely determine the $a_i$ inductively. 

For $i\geq 1$, consider $M_{i-1} = e^{a_1}\cdots e^{a_{i-1}}$ with the convention $M_0 = 1$. 
Define
$$
L_{i-1} := -\partial_x M_{i-1} \cdot M^{-1}_{i-1} + M_{i-1} (e + {\bf u}) M^{-1}_{i-1}.
$$ 
In general, for $X\in \fn_-$, let $X_j$ denote its component in $\fg_{-j}$ in the principal grading \er{prin grad}. 
Inductively, assume that 
$$
(L_{i-1})_j \in \fs\cap \fg_{-j}\quad \text{for }0\leq j\leq i-2.
$$
Note that this inductive hypothesis is vacuous when $i=1$. 
By \er{split}, the component of $L_{i-1}$ in $\fg_{-(i-1)}$ can be uniquely written as 
	\begin{equation}\label{write i}
	(L_{i-1})_{i-1} = [e, a_i] + {\bf J}_{i-1}
	\end{equation}
	with $a_i\in \fg_{-i}$ and ${\bf J}_{i-1} \in \fs\cap \fg_{-(i-1)}$.
	
	For a general $\t a_i\in g_{-i}$ and with 
	$\t M_i = M_{i-1} e^{\t a_i}$, we have 
	\begin{equation}\label{comp li}
	\begin{split}
	\t L_{i} &=-\p_x \t M_i \cdot \t M_i^{-1} + \t M_i (e + {\bf u}) \t M_i^{-1}\\
	&=-\p_x M_{i-1} \cdot M_{i-1}^{-1} - M_{i-1} (\p_x  e^{\t a_i}\cdot e^{-\t a_i}) M_{i-1}^{-1} + M_{i-1}e^{\t a_i} (e+{\bf u}) e^{-\t a_i} M_{i-1}^{-1}.
	\end{split}
	\end{equation}
	Since $\t a_i\in \fg_{-i}$, we see that 
	\begin{align}
	(\t L_i)_j &= (L_{i-1})_j\in \fs\cap \fg_{-j},\quad 0\leq j\leq i-2,\label{keep}\\
	(\t L_i)_{i-1} &= (L_{i-1})_{i-1} + [\t a_i, e]. \nm
	\end{align}
	If and only if we choose $\t a_i = a_i$ from \er{write i}, we see that 
	\begin{equation}\label{lii1}
	(L_i)_{i-1} = (L_{i-1})_{i-1} + [a_i, e] = {\bf J}_{i-1} \in \fs \cap \fg_{-(i-1)},
	\end{equation}
	where $L_i$ denotes $\t L_i$ with $\t a_i = a_i$. 
	Therefore, 
	$$
	(L_i)_j\in \fs\cap \fg_{-j}\quad \text{for }0\leq j\leq i-1.
	$$
	The inductive proof for the existence and uniqueness of $M$ is completed. 
	
	Furthermore, in view of \er{bring out}, \er{write g},  \er{keep}, \er{lii1} and \er{write i}, we have that 
	\begin{equation}\label{stay}
	{\bf I}_{i-1} = (L_{m_\ell})_{i-1} = (L_i)_{i-1} = \text{component of }(L_{i-1})_{i-1}\text{ in }\fs\cap \fg_{i-1}.
	\end{equation}
	That is, the component 
	of  $(L_{i-1})_{i-1}$  in $\fs\cap \fg_{i-1}$ is equal to the component of ${\bf I}$ from \er{bring out} in $\fg_{-(i-1)}$. 
	
	We now show that 
	the component of $L_i$ in $\fg_{-i}$ is
	\begin{equation}\label{lii}
	(L_i)_i = -\p_x a_i + l.o.t.
	\end{equation}
	by showing that the other terms in $(L_i)_i$ from \er{comp li} contain only products of lower order derivatives. 
	We have that 
	$$
	-\p_x M_{i-1}\cdot M_{i-1}^{-1} = - \sum_{j=1}^{i-1} M_{j-1} (\p_x e^{a_j}\cdot e^{-a_j}) M_{j-1}^{-1}. 
	$$
	Since $a_j\in g_{-j}$ and $1\leq j\leq i-1$, we see that all the terms in $(-\p_x M_{i-1}\cdot M_{i-1}^{-1})_i$ contain only products of lower order derivatives. 
	The same argument also applies to the last element $M_{i-1}e^{a_i} (e+{\bf u}) e^{-a_i} M_{i-1}^{-1}$. 
	Since $a_i\in \fg_{-i}$, we see that 
	$$(- M_{i-1} (\p_x  e^{a_i}\cdot e^{-a_i}) M_{i-1}^{-1})_i = -\p_x a_i.$$
	Actually, $ -\p_x a_i$ may contain products too, but it is the only term that contains non-products. 
	
	Now we consider more specifically the highest order derivative terms. Continuing with the above notation, we have 
	$$
	(L_0)_0 = (e + {\bf u})_0 = \sum_{i=1}^\ell u_{i, x} H_{\a_i} = \sum_{j, i=1}^\ell c_{ji} u_{i, x} s_j^{m_j}.
	$$
	From \er{write i}, we have
	$$
	a_1 = -\sum_{j, i=1}^\ell  c_{ji} u_{i, x} s_j^{m_j-1}.
	$$
	By \er{lii}, we have
	$$
	(L_1)_1 = -\p_x a_1 + l.o.t. = \sum_{j, i=1}^\ell  c_{ji} u_{i, x}^{(2)} s_j^{m_j-1}+ l.o.t..
	$$
	For all simple Lie algebras, $m_1 = 1$ and $m_2>1$ by \cite{K1}. Therefore, by \er{stay} we see that $I_1 = \sum_{i=1}^\ell c_{1i} u_{i, x}^{(2)} + l.o.t.$. 
	
	Continuing this way, we see that \er{lot} holds for all $j=1,\dots, \ell$. 
	
	Then through the Gaussian elimination,
	there exists a permutation $\sigma$ in the symmetric group $S_\ell$
	such that the following set
	\begin{equation}\label{gens}
	\{u_{\sigma(i), x}^{(k)}\,|\,1\leq k\leq m_i,\ 1\leq i\leq \ell\}
	\end{equation}
	is a set of generators of $\bc[{\bf U}]/\ci$ in the sense that the other 
	$$
	u_{\sigma(i), x}^{(k)},\quad k\geq d_i,\ 1\leq i\leq \ell
	$$
	are solved as polynomials in them modulo $\ci$. 
	The cardinality of the set \er{gens} is 
	$$
	m_1 + \cdots + m_\ell = \dim N_-
	$$
	by \cite{K1}. 
	\end{proof}

	We now study the restriction of the left $G$-action on $G/B_+$ to $N_-$. 
	The natural multiplication map $N_-\times B_+\to G$ is injective and its image $G_r$ is an open subset, called the regular part of $G$ (see 
	\cite{KosToda}*{Eq. (2.4.4)}, \cite{LSbook}*{Eq. (1.5.6)}). 
	For $n_1\in N_-$, 
	there exists an open set $1\in U\subset G$ 
	such that $gn_1\in G_r$ for $g\in U$. 
	The action $g\in U$ on $n_1$ is obtained by the following normalization procedure:
	\begin{equation}\label{action}
		\text{if } gn_1 = \t n_1 p\in N_-B_+, \text{ then } g\cdot n_1 := \t n_1.
	\end{equation}
	For $a\in \fg$, we denote by $\cl_a$ 
	the infinitesimal action of $a$ 
	on $\bc[N_-]$. 
	Explicitly, 
	\begin{equation}\label{la}
	(\cl_a f)(n) = \frac{d}{dt}\Big|_{t=0} f(\exp(-ta)\cdot n),\quad n\in N_-,\ f\in \bc[N_-].
	\end{equation}
	In particular, we have the vector field $\cl_e$ for $e=\sum_{i=1}^\ell e_{\a_i}$ defined on $N_-$. 
	
	Following \cite{F5}*{Lemma 1.1}, we have the following lemma about the action of $\cl_a$. 
	Here we take a faithful representation of $N_-$ and represent an element $K\in N_-$ by a matrix whose $(i, j)$th entry $f_{ij}$ is considered as a regular function in $\bc[N_-]$. 
	Then $\cl_a K$ is the matrix whose $(i, j)$th entry is $\cl_a f_{ij}$. 
	\begin{lemma} For $a\in \fg$ and $K\in N_-$, we have 
	\begin{equation}\label{reason}
	K^{-1} \cl_a K = -(K^{-1} a K)_-,
	\end{equation}
	where $(\cdot)_- : \fg = \fn_- \oplus \fb_+ \to \fn_-$ is the projection.
	\end{lemma}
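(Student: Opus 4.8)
The plan is to differentiate the defining relation \er{action} of the $G$-action along the one-parameter subgroup $\exp(-ta)$, and then read off the $\fn_-$- and $\fb_+$-components using the direct sum $\fg = \fn_- \oplus \fb_+$. The whole lemma is really a record of what this factorization does infinitesimally.

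First I would fix $n \in N_-$ and consider the curve $t \mapsto \exp(-ta)\cdot n$. By \er{action}, for $t$ small the element $\exp(-ta)\,n$ lies in the regular part $G_r = N_- B_+$, so we may write the Gauss-type factorization
$$\exp(-ta)\, n = \tilde n(t)\, p(t), \qquad \tilde n(t) \in N_-,\ p(t) \in B_+,$$
with $\exp(-ta)\cdot n = \tilde n(t)$. Since the product map $N_-\times B_+\to G_r$ is a diffeomorphism onto its image, the factors $\tilde n(t)$ and $p(t)$ depend smoothly on $t$, and at $t=0$ we have $\tilde n(0) = n$ and $p(0) = 1$, so the differentiation below is legitimate.

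Next I would differentiate this identity at $t = 0$. Since $\frac{d}{dt}\big|_{t=0}\exp(-ta) = -a$, the left side gives $-a n$, while the product rule on the right gives $\tilde n'(0) + n\, p'(0)$. Left-multiplying by $n^{-1}$ yields
$$-n^{-1} a n = n^{-1}\tilde n'(0) + p'(0).$$
The key observation is the placement of the two right-hand terms: because $\tilde n(t)$ stays in $N_-$ with $\tilde n(0)=n$, the left-translated velocity $n^{-1}\tilde n'(0)$ lies in $\fn_-$; because $p(t)$ stays in $B_+$ with $p(0)=1$, we have $p'(0) \in \fb_+$. Hence the displayed equation is precisely the decomposition of $-n^{-1} a n$ along $\fg = \fn_- \oplus \fb_+$, and projecting onto $\fn_-$ gives
$$n^{-1}\tilde n'(0) = -(n^{-1} a n)_-.$$

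Finally I would identify the left side with the value of $K^{-1}\cl_a K$ at $n$. Under the faithful representation, evaluating the matrix $K$ of coordinate functions at $n$ returns $n$ itself, so by \er{la} the matrix $\cl_a K$ evaluated at $n$ equals $\frac{d}{dt}\big|_{t=0}\tilde n(t) = \tilde n'(0)$; thus $(K^{-1}\cl_a K)(n) = n^{-1}\tilde n'(0)$. Combining this with the previous display and letting $n$ range over $N_-$ proves \er{reason}. I expect the only delicate point to be the bookkeeping of which one-sided translate of each velocity vector lands in $\fn_-$ versus $\fb_+$; once the factorization is differentiated this is immediate, and the uniqueness of the Gauss factorization is what makes $\tilde n(t)$ and $p(t)$ well defined and smooth.
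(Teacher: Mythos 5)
Your proposal is correct and follows essentially the same route as the paper: both differentiate the Gauss factorization $\exp(-ta)\,n=\tilde n(t)p(t)$ at $t=0$ and read off the $\fn_-$- and $\fb_+$-components of $-n^{-1}an$ from the decomposition $\fg=\fn_-\oplus\fb_+$. Your version is slightly more careful about the smoothness of the factors and about why the one-sided translates of the velocities land in the correct subalgebras, but the argument is the same.
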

	
	\begin{proof} Choose a one-parameter subgroup $a(t)$ of $G$ such that $a(t) = 1 - t a + o(t)$. We have 
	$
	a(t) K = K - t a K + o(t).
	$
	For small $t$, we can factor $a(t) K$ into a product $L_- L_+$, where $L_- = K + t L_-^{(1)} + o(t) \in N_-$ and $L_+ = 1 + t L_+^{(1)} + o(t) \in B_+$. Therefore we have
	$$
	K L_+^{(1)} + L_-^{(1)} = -aK.
	$$
	From this we see that $L_-^{(1)} = - K(K^{-1}aK)_-$. This proves
	formula \er{reason}.
	\end{proof}
	
	For $1\leq i\leq \ell$, let $\omega_i \in \fh^*$ be the $i$th fundamental weight defined by 
	the conditions that 
	$\omega_i(H_{\a_j}) = \delta_{ij}$ for $1\leq j\leq \ell$. 

	\begin{proof}[Proof of Theorem \ref{jets N}]
	Let $K\in N_-$. Following Kostant \cite{K3} and \cites{FF-Inv, F5}, we define the following functions on $N_-$
	\begin{equation}\label{coord}
	\begin{split}
	v_i &:= ( \omega_i, K^{-1} e K), \quad 1\leq i\leq \ell,\\
	v_i^{(n)} &:= \cl_e^n v_i, \qquad\quad\quad\ n\geq 0.
	\end{split}
	\end{equation}
	Here $(\cdot, \cdot)$ is the Killing form on $\fg$, and $\omega_i$ is regarded as an element of $\fh$ via the Killing form. 
	For simplicity, we write $u_i^{(k)}$ for $u_{i, x}^{(k)}$. 
	Define a ring homomorphism
	\begin{equation}\label{whichmap}
	\varphi: \bc[{\bf U}] \to \bc[N_-], \ \ u_i^{(k)}\mapsto v_i^{(k-1)},\ k\geq 1.
	\end{equation}
	We note that under this homomorphism, $\p_x$ corresponds to $\cl_e$ by the above definitions. 
	Now we show that the images $\varphi(I_j)$ of the characteristic integrals $I_j$ are zero. That is, if we replace the $u_i^{(k)}$ by the $v_i^{(k-1)}$ in $I_j$, we 
	get zero. 
	
	We find the images $\varphi(I_j)$ by 
	adapting our algorithm in \er{bring up} for computing $I_j$. 
	So we replace $\p_x$ by $\cl_e$ and ${\bf u}$ by ${\bf v}=\sum_{i=1}^\ell v_i H_{\a_i}$. Then by the same
	reason as for \er{bring up}, there exists a unique element $M_1\in N_-$, whose entries are functions on $N_-$, such that 
	\begin{equation}\label{theory}
	M_1 (\cl_e + e + {\bf v}) M_1^{-1} = \cl_e + e + \sum_{j=1}^\ell J_j s_j.
	\end{equation}
	Then $J_j = \varphi(I_j)$ for $1\leq j\leq \ell$. 
	
	We will show below that 
	the unique $M_1$ in \er{theory} is the $K$ in \er{coord}, 
	and \er{theory} becomes
	\begin{equation}\label{simple}
	K (\cl_e + e + {\bf v}) K^{-1} = \cl_e + e. 
	\end{equation}
	Therefore, $\varphi(I_j)=J_j = 0$ for $1\leq j\leq \ell$. 
	
	It is clear that $K(\cl_e K^{-1}) = -(\cl_e K) K^{-1}$. Therefore by \er{reason}, we have
	\begin{align*}
	K (\cl_e + e + {\bf v}) K^{-1} &= \cl_e - (\cl_e K) K^{-1} + K(e + {\bf v}) K^{-1} \\ 
	&= \cl_e +  K (K^{-1} e K)_- K^{-1} + K(e + {\bf v}) K^{-1}\\
	&= \cl_e +  K \big((K^{-1} e K)_- + e + {\bf v}\big) K^{-1} \\
	&= \cl_e +  K(K^{-1} e K) K^{-1} \\
	&= \cl_e +  e.
	\end{align*}
	Here we have used that $K^{-1} e K 
	= e + {\bf v}+ (K^{-1} e K)_-$ by \er{coord} since the Cartan component of 
	$K^{-1} e K$ is equal to $\sum_{i=1}^\ell (\omega_i, K^{-1}eK) H_{\a_i} = {\bf v}$. 

	Furthermore, since $\p_x$ corresponds to $\cl_e$, we have $\varphi(\p_x^m I_j) = \cl_e^m J_j = 0$ for $m\geq 0$. 
	Therefore, the morphism $\varphi$ in \er{whichmap} descends to a morphism, which we continue to denote by the same notation, 
	\begin{equation}\label{descend}
	\varphi: \bc[{\bf U}]/\ci \to \bc[N_-].
	\end{equation}
	Now we show that this morphism is an isomorphism, and this part is analogous to the proof of Theorem 1.1 in \cite{F5}. 
	
	To prove that $\varphi$ is injective, we have to show that the images $\varphi(u_{\sigma(i)}^{(k)}) = v_{\sigma(i)}^{(k-1)}$ for $1\leq k\leq m_i,\ 1\leq i\leq \ell$ 
	of the set \er{gens} are algebraically independent. We will do that by showing that at the identity $1\in N_-$, the $dv_{\sigma(i)}^{(k-1)}\big|_1$ for 
	$1\leq k\leq m_i,\ 1\leq i\leq \ell$ are linearly independent in the 
	 cotangent space $T^*_1 N_-$, which 
	is identified with $\fn_+$ through the Killing form. 
	It can be checked that $dv_i\big|_1 = e_{-\a_i}^*$, which is identified with $\frac{(\a_i, \a_i)}{2}e_{\a_i}$ 
	since $(e_{-\a_i}, e_{\a_i})=\frac{2}{(\a_i,\a_i)}$ by our normalization \er{another Cartan}. 
	Furthermore by \er{coord}, we have $dv_i^{(n)}\big|_1 = \on{ad}_e^{n} (dv_i\big|_1)$. 
	The set 
	$$
	\{ \on{ad}_e^{k-1}(dv_{\sigma(i)}|_1)\,|\, 1\leq k\leq m_i,\ 1\leq i\leq \ell\}
	$$
	is a basis of $\fn_+$ by \cite{K1} and hence is linearly independent. 
		
	To prove that $\varphi$ is surjective, we introduce gradings and compare them. 
	On $\bc[{\bf U}]/\ci$, we set $\deg u_{\sigma(i)}^{(k)} = k$ for $1\leq k\leq m_i,\ 1\leq i\leq \ell$ . 
	The above algebraic independence also shows that $\bc[{\bf U}]/\ci$ is a polynomial algebra with 
	$b_i$ generators of degree $i$ by \cite{K1}, where $b_i = \dim \fg_i$ from \er{prin grad} for $1\leq i\leq m_\ell$. 
	
	On $\bc[N_-]$, we take the derivation $\cl_{H_0}$ as the grading operator, where $H_0$ is the grading element for the principal grading \er{prin grad}. 
	Then it can be checked that $\deg v_i = 1$. Since $[H_0, e]=e$, the degree of $\cl_e$ also equals 1 and so $\deg v_i^{(k-1)} = k$. 
	Therefore, the morphism $\varphi$ in \er{descend} preserves degrees. Since $\exp: \fn_-\to N_-$ is an isomorphism, we see that $\bc[N_-]$ is also a polynomial algebra on $b_i$ 
	generators of degree $i$ for $1\leq i\leq m_\ell$. 
	Hence the character of $\bc[N_-]$ coincides with the character of $\bc[{\bf U}]/\ci$, and $\varphi$ is an isomorphism. 
	Here by a character of a $\bz$-graded vector space $V$ we understand the formal power series
	$$
	\on{ch}V = \sum_{n\in \bz} \dim V_n q^n,
	$$
	where $V_n$ is the homogeneous subspace of $V$ of degree $n$. Concretely, 
	$$
	\on{ch} \bc[{\bf U}]/\ci = \on{ch} \bc[N_-] = \prod_{i=1}^{m_\ell} (1-q^i)^{-b_i}.
	$$
	\end{proof}
		
	\begin{example}\label{A_2} 
	To illustrate our results, let us
	consider the example of $A_2$ Toda field theory \er{toda} as
	$$
	\begin{cases}
	u_{1, xy} = - e^{2u_1-u_2}\\
	u_{2, xy} = - e^{-u_1+2u_2}.
	\end{cases}
	$$
	Let $E_{ij}$ denote the matrix of dimension 3 whose only nonzero element is 1 at position $(i, j)$. 
	We use standard choices of $e = E_{12} + E_{23}$, $H_{\a_1} = E_{11}- E_{22}$, $H_{\a_2} = E_{22} - E_{33}$, 
	$s_1 = E_{21}$, and $s_2=E_{31}$. Then  the transformation matrix from $(H_{\a_1}, H_{\a_2})$ to $(s_1^1, s_2^2)$
	is $\left(\begin{smallmatrix} -1 & -1\\ 0 & -1\end{smallmatrix}\right)$. 
	
	Our formula from \er{bring up} computes that 
	\begin{equation}\label{the Is}
	\begin{split}
	I_1 &= -u_{1, xx} - u_{2, xx} + u_{1, x}^2 - u_{1, x} u_{2, x} + u_{2, x}^2,\\
	I_2 &= -u_{2, xxx} + 2u_{2, xx}u_{2, x} - u_{1, xx}u_{2, x} + u_{1, x}^2 u_{2, x} - u_{1, x}u_{2, x}^2.
	\end{split}
	\end{equation}
	Note that \er{lot} is thus checked. 
	
	From these we see that a set of ring generators for $\bc[{\bf U}]/\ci$ in Proposition \ref{polynm} can be chosen as $(u_{1, x}, u_{2, x}, u_{2, xx})$
	 or $(u_{1, x}, u_{2, x}, u_{1, xx})$. 
	
	Let us also show the content of Theorem \ref{jets N} using this example. An element $K\in N_-$ has the following form
	\begin{equation}\label{def K}
	K = \begin{pmatrix}
	1 & & \\
	v_1 & 1 & \\
	v_3 & v_2 & 1
	\end{pmatrix},
	\end{equation}
	where $v_1, v_2, v_3$ are the coordinates on $N_-$. 
	We see that the definitions of $v_1$ and $v_2$ in \er{coord} are compatible with the above. 
	By \er{la} and the action \er{action}, we compute that 
	$$
	\cl_e = (v_1^2 - v_3) \frac{\p}{\p v_1} + (v_2^2 + v_3 - v_1 v_2)\frac{\p}{\p v_2} + v_1 v_3\frac{\p}{\p v_3}.
	$$
	
	It is easy to check directly  that for the characteristic integrals  in \er{the Is} and the $\varphi$ in \er{whichmap}, we have $\varphi(I_1)=0$ and $\varphi(I_2) =0$. 
	Then the map in \er{descend} is 
	\begin{equation}\label{transf}
	\begin{split}
	\varphi : \bc[{\bf U}]/\ci \to \bc[N_-];&\ u_{1, x}\mapsto v_1,\ u_{2, x}\mapsto v_2,\\
	&\ u_{2, xx} \mapsto v_2^{(1)} = \cl_e v_2 = v_2^2 + v_3 - v_1 v_2.
	\end{split}
	\end{equation}
	\end{example}
	
\section{From Toda field theories to standard differential systems}\label{sect-rest}
	
	In this section, we prove Theorem \ref{Serre} by utilizing Theorem \ref{jets N}. 
	
	\begin{proof}[Proof of Theorem \ref{Serre}]	
	We represent the Toda field theory \er{toda} as the Pfaffian system on the following infinite jet space with coordinates
	$$
	(x, y, u_i, u_{i,x}^{(k)}, u_{i, y}^{(k)}), \quad 1\leq i\leq \ell,\ k\geq 1.
	$$
	By convention, $u_i = u_{i, x}^{(0)} = u_{i, y}^{(0)}$, and $u_{i, x} = u_{i, x}^{(1)}$, $u_{i, y} = u_{i, y}^{(1)}$.  
	Let $\p_x$ and $\p_y$ be the derivative with respect to $x$ and $y$ respectively. We have $\p_x u_{i, x}^{(k)} = u_{i, x}^{(k+1)}$ and $\p_y u_{i, y}^{(k)} = u_{i, y}^{(k+1)}$. 
	Using the shorthand \er{rhoi},  the differential 1-forms defining the Toda field theory \er{toda} are 
	$$
	\begin{cases}
	du_i - u_{i,x} dx - u_{i, y}dy,\\
	du_{i,x}^{(k)} - u_{i, x}^{(k+1)}dx + \p_x^{k-1} e^{\rho_i} dy,\quad k\geq 1,\\
	du_{i,y}^{(k)} + \p_y^{k-1} e^{\rho_i} dx - u_{i, y}^{(k+1)} dy,\quad k\geq 1.
	\end{cases}
	$$
		
	Now we restrict $x$ to a constant in the jet space while the $u_{i, x}^{(k)}$ are still coordinates. Then $dx=0$ and the differential system becomes
	\begin{equation}\label{with x-jets}
	\begin{cases}
	du_{i,y}^{(k)} - u_{i, y}^{(k+1)}dy,\quad k\geq 0\\
	du_{i,x}^{(k)} + \p_x^{k-1} e^{\rho_i} dy,\quad k\geq 1.
	\end{cases}
	\end{equation}

	Define $B_i^k$ via 
	$$
	\p_x^k e^{\rho_i} = e^{\rho_i} B^k_i, \quad\text{for } 1\leq i\leq \ell,\ k\geq 0. 
	$$
	The $B^k_i$ are differential polynomials of the $u_j$ with respect to $x$ and they clearly satisfy that for $1\leq i\leq \ell$, 
	\begin{equation}\label{rec for B}
	B^0_i = 1, \qquad B^{k}_i = \p_x B^{k-1}_i + \rho_{i, x} B^{k-1}_i,\ k\geq 1.
	\end{equation}
	It is in this way that we make sense of the second set of 1-forms in \er{with x-jets}. 
	The first set of 
	1-forms in \er{with x-jets} are just some jet relations.
	Therefore disregarding them, we represent our Toda field theory \er{toda} restricted to $x=\text{constant}$ on the manifold with coordinates
	\begin{equation*}\label{res coor}
	(y, u_i, u_{i, x}^{(k)}), \quad 1\leq i\leq \ell,\ k\geq 1
	\end{equation*}
	by the system of 1-forms
	$$
	du_{i,x}^{(k)} + e^{\rho_i}B^{k-1}_i dy,\quad 1\leq i\leq \ell,\ k\geq 1.
	$$

	We use the dual viewpoint
	and choose the following basis of vector fields that generate at each point the corresponding distribution
	\begin{equation}\label{vf1}
	\begin{split}
	U_j &:= \sum_{i=1}^\ell a^{ij} \frac{\p}{\p u_i},\quad 1\leq j\leq \ell,\\
	Y &:= \frac{\p}{\p y} - \sum_{i=1}^\ell  e^{\rho_i} \sum_{k\geq 1} B^{k-1}_i \frac{\p}{\p u_{i, x}^{(k)}},
	\end{split}
	\end{equation}
	where $(a^{ij}) = (a_{ij})^{-1}$. 
	
	Taking Lie brackets and recalling \er{rhoi}, for $1\leq j \leq \ell$ we have
	\begin{equation}\label{def vj}
	\begin{split}
	V_j := [Y, U_j] =  \sum_{m, i=1}^\ell a^{mj} a_{im} e^{\rho_i} \sum_{k\geq 1} B^{k-1}_i \frac{\p}{\p u_{i, x}^{(k)}}
	 = e^{\rho_j} \sum_{k\geq 1} B^{k-1}_j \frac{\p}{\p u_{j, x}^{(k)}}.
	\end{split}
	\end{equation}
	Then $Y = \frac{\p}{\p y} - \sum_{j=1}^\ell V_j$, and the bracket relations so far are 
	\begin{equation}\label{more}
	[\frac{\p}{\p y}, U_j] = 0, \quad  [\frac{\p}{\p y}, V_j] = 0, \quad [U_i, V_j] = \delta_{ij} V_j.
	\end{equation}
	Now we study the bracket relations among the $V_j$, and we will show that they generate $\fn_-$. 
	
	Define	
	\begin{equation}\label{tvj}
	\wt{V_j} = e^{-\rho_j} V_j = \sum_{k\geq 1} B^{k-1}_j \frac{\p}{\p u_{j, x}^{(k)}}.
	\end{equation}
	Then for $1\leq j_1,\cdots,j_m\leq \ell$, we have
	$$
	[V_{j_1}, \cdots,[V_{j_{m-1}}, V_{j_m}]\cdots] = e^{\rho_{j_1} + \cdots + \rho_{j_m}} [\wt{V_{j_1}}, \cdots,[\wt{V_{j_{m-1}}}, \wt{V_{j_m}}]\cdots].
	$$
	Now we show that the $\wt{V_j}$ can be viewed as derivations on the ring $\bc[{\bf U}]/\ci$ from Theorem \ref{jets N}.
	
	First we note that the vector fields $Y$ and the $U_j$ in \er{vf1}
	annihilate
	all the characteristic integrals $I_i$ and their derivatives on the $x$-side. 
	This is the case for $Y$ by the definition of characteristic integrals and that $Y$ is the total derivative vector $\p_y$.
	This is the case for the $U_j$ since the characteristic integrals $I_i$ contain at least the first-order derivatives in view of \er{bring up} and the definition of 
	{\bf u} in \er{players}. 
	Therefore, by \er{def vj} and \er{tvj}, 
	 we have $\cl_{\wt{V_j}}\ci\subset \ci$ and the $\wt{V_j}$ 
	 descend as derivations on the ring $\bc[{\bf U}]/\ci$. 
	
	By the isomorphism in Theorem \ref{jets N}, derivations on $\bc[N_-]$ or equivalently (algebraic) vector fields on $N_-$ 
	canonically correspond to derivations on $\bc[{\bf U}]/\ci$. 
	For $1\leq j\leq \ell$, let $e_{-\a_j}^R$ be the vector field for the infinitesimal 
	action of $e_{-\a_j}$ under the right multiplication of $N_-$ on $N_-\subset G/B_+$.
	Explicitly,  we have
	\begin{equation}\label{eR}
	(e_{-\a_j}^R f)(n) = \frac{d}{dt}\Big|_{t=0} f(n \exp(te_{-\a_j})),\quad n\in N_-,\ f\in \bc[N_-].
	\end{equation}
	By \cite{K3}*{Proposition 3.5 and Theorem 2.2}, we know that as vector fields on $N_-$, 
	\begin{equation}\label{kos imp}
	[\cl_e, e_{-\a_j}^R] = -(\a_j, K^{-1} e K) e_{-\a_j}^R,
	\end{equation}
	where $K\in N_-$ and $(\a_j, K^{-1}eK)$ is a function on $N_-$. 
	
	We denote by $\wt{e_{-\a_j}^R}$ the derivation on $\bc[{\bf U}]/\ci$ corresponding to $e_{-\a_j}^R$ on $\bc[N_-]$.
	Note that $\cl_e$ corresponds to $\p_x$ under the isomorphism in Theorem \ref{jets N}. 
	By $\a_j = \sum_{m=1}^\ell a_{jm}\omega_m$ and in view of \er{coord} and \er{whichmap}, \er{kos imp} gives 
	\begin{equation}\label{new rel}
	[\wt{e_{-\a_j}^R}, \p_x] = \big(\sum_{m=1}^\ell a_{jm} u_{m, x}\big) \wt{e_{-\a_j}^R} = \rho_{j, x} \wt{e_{-\a_j}^R}.
	\end{equation}
	As a derivation on $\bc[{\bf U}]/\ci$, we have
	$$\p_x = \sum_{i=1}^\ell\sum_{k\geq 1} u_{i, x}^{(k+1)} \frac{\p}{\p u_{i, x}^{(k)}}.$$ 
	Write 
	$$\wt{ e_{-\a_j}^R} = \sum_{i=1}^\ell\sum_{k\geq 1} C^{k-1}_{j, i} \frac{\p}{\p u_{i,x}^{(k)}},$$ 
	where the $C^{k-1}_{j, i}$ are functions of $\{u_{i, x}^{(m)}\}_{1\leq i\leq \ell}^{m\geq 1}$. 
	Then from \er{new rel}, we see that
	$$
	C_{j, i}^{k} = \p_x C_{j, i}^{k-1} + \rho_{j, x} C_{j, i}^{k-1},\quad k\geq 1.
	$$
	Using \er{eR}, we see that
	$C_{j, i}^0 = \delta_{ij}$. 
	Hence $C_{j, i}^k = 0$ if $j\neq i$, and $C_{i, i}^k = B_i^k$ in view of \er{rec for B}. 
	Therefore, 
	\begin{equation}\label{identify}
	\wt{V_j} = \wt{e_{-\a_j}^R},
	\end{equation}
	as derivations on $\bc[{\bf U}]/\ci$. This shows, in particular, that the $\wt{V_j}$ and hence the $V_j$ generate $\fn_-$. 
	
	Now we show that the Lie algebra generated by the vector fields in \er{vf1} is isomorphic to 
	a Lie algebra of vector fields 
	on the first jet space 
	of the integral curves of the standard 
	differential system on the big cell $N_- \subset G/B_+$. 
	
	Two integral curves $\Phi_1, \Phi_2: I\to N_-$ of the standard differential system (see \er{stand sys}) are called 1-equivalent at a point $y_0\in I$ 
	if their graphs have a contact of order 1 at the point $\Phi_1(y_0) = \Phi_2(y_0)$. The equivalence class of $\Phi$ with respect to this relation 
	is denoted by $[\Phi]_{y_0}^1$ and is called the 1-jet of $\Phi$ at $y_0$. 
	For an interval $I\subset \br^1$, the first jet space is  
	$$J^1_y(N_-, D)=\{[\Phi]_{y_0}^1\,|\, \Phi:I\to N_-, y_0\in I, \Phi \text{ satisfies }		\er{stand sys}\}$$ 
	with a natural structure of manifold. 
	Let $\omega\in \Omega^1(N_-, \fn_-)$ be the Maurer-Cartan form. Then $\Phi$ 
	satisfies \er{stand sys} iff $\omega(\Phi'(y))\in \fg_{-1}$. 
	Hence we can write 
	\begin{equation}\label{sds out}
	\omega(\Phi'(y)) = \sum_{i=1}^\ell \phi_i(y) e_{-\a_i}
	\end{equation}
	using some functions $\phi_i(y)$. 
	
	Therefore, $J^1_y(N_-, D)$ is a manifold with coordinates 
	$$
	(y, \Phi^0, \phi_i^0),\quad 1\leq i\leq \ell,
	$$
	where $\Phi^0$ is the set of global coordinates on $N_-$.  
	Here the superscripts 0 stand for 0-jets (the map values), and are used to distinguish the coordinates on the jet space from the actual 
	representative maps. 
	By \er{sds out}, the Pfaffian system on $J_y^1(N_-, D)$ is defined by the components of the $\fg$-valued $1$-form
	\begin{equation}\label{ics}
	\omega  - \sum_{i=1}^\ell \phi_i^0 \,dy \otimes e_{-\a_i}.
	\end{equation}
	where $\omega$ is  expressed in terms of the coordinates $\Phi^0$. 
	
	Let us now consider the dual viewpoint in terms of distributions. We identify the tangent spaces of $N_-$, through the left translation to the identity, with the Lie algebra $\fn_-$. 
	Then $\{\frac{\p}{\p y}, \frac{\p}{\p \phi_i^0}, e_{-\a}\}$  is a basis for the tangent spaces to $J^1_y(N_-, D)$, where $1\leq i\leq \ell$ and the $-\a\in \Delta^-$ 
	are the negative roots. 
	The  distribution dual to \er{ics} is globally generated, where $\phi_i^0\neq 0$, by 
	\begin{gather*}
	\phi_i^0\frac{\p}{\p \phi_i^0},\quad 1\leq i\leq \ell,\quad\text{and}\\
	 \quad \ol Y = \frac{\p}{\p y} + \sum_{i=1}^\ell \phi_i^0 e_{-\a_i}.
	\end{gather*}
	since the Maurer-Cartan form satisfies $\omega(a) = a$ for $a\in \fn_-$.
	
	Analogously to \er{def vj}, define
	\begin{equation}\label{bvj}
	\ol{V_j} := [\ol Y, \phi_j^0 \frac{\p}{\p \phi_j^0}] = -\phi_j^0 e_{-\a_j},\quad 1\leq j\leq \ell.
	\end{equation}
	The $\ol{V_j}$ clearly generate $\fn_-$. We have $\ol Y = \frac{\p}{\p y} - \sum_{j=1}^\ell \ol{V_j}$ and also the bracket relations 
	\begin{equation}\label{the bars}
	[\frac{\p}{\p y}, \phi_i^0\frac{\p}{\p \phi_i^0}] = 0, \quad [\frac{\p}{\p y}, \ol{V_j}] = 0, \quad [\phi_i^0\frac{\p}{\p \phi_i^0}, \ol{V_j}] = \delta_{ij} \ol{V_j}.
	\end{equation}
	
	By the comparison of \er{more} with \er{the bars} and \er{tvj} with \er{bvj}, we see using \er{identify} that the map
	\begin{equation}\label{isom}
	Y\mapsto \ol Y,\quad U_i\mapsto \phi_i^0\frac{\p}{\p \phi_i^0},\ 1\leq i\leq \ell,
	\end{equation}
	establishes an isomorphism between the two sets of vector field generators of the two distributions which respects their successive Lie brackets. 
	This completes the proof of Theorem \ref{Serre}.
	\end{proof}
	
	\begin{example} We continue to use Example \ref{A_2} to illustrate our results. Using $e_{-\a_1} = E_{21}$, $e_{-\a_2} = E_{32}$ and the coordinates in
	\er{def K}, we see that 
	$$
	e_{-\a_1}^R = \frac{\p}{\p v_1} + v_2\frac{\p}{\p v_3}, \quad e_{-\a_2}^R = \frac{\p}{\p v_2}.
	$$
	By the transformation \er{transf} and the chain rule, we see that 
	$$
	\wt{e_{-\a_1}^R} = \frac{\p}{\p u_{1, x}}, \quad \wt{e_{-\a_2}^R} = \frac{\p}{\p u_{2, x}} + (2u_{2, x}-u_{1, x}) \frac{\p}{\p u_{2, xx}}.
	$$
	These are clearly the $\wt{V_1}$ and $\wt{V_2}$ from \er{tvj} expressed in the generators $u_{1, x}$, $u_{2, x}$ and $u_{2, xx}$ of $\bc[{\bf U}]/\ci$. 
	\end{example}

\section{From standard differential systems to Toda field theories}\label{sect-invs}
	
	
	In this section, we prove Theorem \ref{main}, 
	which realizes the quotient structure of \cite{AFV}*{Theorem 1.4} for Toda field theories as Darboux integrable differential systems. 
	
	
	\begin{proof}[Proof of Theorem \ref{main}]

	By \er{stand sys}, an integral curve in $N_-$ of the standard differential system is locally a map 
	$\Phi:  I \to N_-$ such that 
	\begin{equation}\label{def curve}
	a := \Phi^{-1} \Phi_y \in \fn_- \text{ lies in the subspace }\fg_{-1},
	\end{equation}
	where $\Phi_y = \Phi'(y)$. 
	
	Similarly there is the following ``transposed" version on the $x$-side. We have the similarly defined subgroups $N_+$ and $B_-$ 
	of $G$ integrating the Lie subalgebras $\fn_+ = \oplus_{i>0} \fg_i$ and $\fb_- = \oplus_{i\leq 0} \fg_i$. 
	Furthermore the space 
	$ B_-\backslash G$ of right cosets contains $N_+$ as a big cell. 
	The \emph{left} action of $G$ on $B_-\backslash G$ is defined by $g(B_- k) = B_- kg^{-1}$ for $g\in G$ and $B_-k\in B_-\backslash G$. 
	The action of $g$  in an open set of $G$ (which contains the identity)  on $n_2 \in N_+$ is defined by 
	\begin{equation}\label{hard-action}
		\text{if } n_2 g^{-1} = q\t n_2 \in B_- N_+, \text{ then } g\cdot n_2 := \t n_2.
	\end{equation}
		The standard distribution $\t D$ on $B_-\backslash G$ is defined as the $G$-invariant distribution equal to $\fg_1$ mod $\fb_-$ at $\t o = B_- \cdot 1$. 
	An integral curve in $N_+$ of the standard differential system is thus locally a map  $\Psi: J \to N_+$, where $J$ is an interval, such that
	\begin{equation}\label{def y-curve}
		b := \Psi_x \Psi^{-1} \in \fn_+\text{ lies in the subspace }\fg_{1},
	\end{equation}
		where $\Psi_x = \Psi'(x)$, $x\in J$. 
	
		We write the $a$ and $b$ in \er{def curve} and \er{def y-curve} as 
		\begin{equation}\label{ab}
		a=\sum_{i=1}^\ell \phi_i(y) e_{-\a_i}\quad\text{and}\quad b=\sum_{i=1}^\ell \psi_i(x) e_{\a_i}.
		\end{equation}  
		
		Therefore, we have the following coordinates 		
		$$
		(x, y, \Psi^0, \psi^0_i, \Phi^0, \phi^0_i),\quad 1\leq i\leq \ell.
		$$ 
		on the product $J^1_x(N_+, \t D)\times J^1_y(N_-, D)$ of jet spaces. 
		An element  $g\in G$ acts trivially on $x$ and $y$, acts on $\Phi^0$ and $\Psi^0$ by \er{action} and \er{hard-action} when defined, and acts on $\phi_i^0$ 
		and $\psi_i^0$ through the 
		prolonged action. 
		That is, given a point $(y_0, \Phi^0_0, \phi^0_{i, 0})\in J^1_y(N_-, D)$, we choose a representative map $\Phi: I\to N_-$ such that $\Phi(y_0) = \Phi^0_0$
		and 
		$\Phi(y_0)^{-1}
		\Phi_y(y_0) = \sum_{i=1}^\ell \phi^0_{i,0} e_{-\a_i}$, then we define
		$$
		\sum_{i=1}^\ell (g\cdot \phi^0_{i,0}) e_{-\a_i} := (g\cdot \Phi(y_0))^{-1}
		\frac{d}{dy}\Big|_{y=y_0}(g\cdot \Phi(y))
		$$
		where $g$ is close to the identity so that $g\cdot \Phi(y_0)$ is defined as in \er{action}. 
		The definition of $g\cdot \psi^0_i$ is similar. 
		We refer the reader to \cite{OP}*{Chapter 5} for background on prolonged group action and differential invariants.

		The number of coordinates that $G$ acts on is $\dim N_- + \dim N_+ + 2\ell = \dim G + \ell$. 
		Now we construct $\ell$ differential invariants and show that they satisfy the Toda field theory \er{toda}. 
		In this proof, we will work with representatives of the jet spaces.  
		That is, we will work with actual integral curves $\Phi: I\to N_-$ and $\Psi: J\to N_+$ satisfying \er{def curve} and \er{def y-curve}. 
		
		We use the bracket notation following the physicists \cite{LSbook}. For $1\leq i\leq \ell$, we denote 
		by $|i\ra$ the highest weight vector for the $i$th fundamental representation $V_{i}$ of $G$ \cite{FH}, and by $\la i |$ the lowest weight vector for the dual
		representation $V_i^*$, where $G$ acts from the right. There are the induced representations of $\fg$ and its universal enveloping algebra $U(\fg)$ (see \cite{Knapp}) on $V_{i}$ and $V_i^*$. 
		For $\mu, \nu\in U(\fg)$ and $g \in G$, $\la i |\nu g \mu| i \ra$ denotes the pairing of $\la i |\nu$ in $V_i^*$ with $g(\mu |i\ra)$ in $V_i$.
		We require that for the identity element $1\in G$, we have $\la i | 1 | i \ra = 1$. Then consider the following 
		function 
		$$
		\xi_i := \la i | \Psi(x) \Phi(y) |i\ra.
		$$
		This function is not invariant under the $G$-action, but its transformation under $g\in G$ is simple. 
		Using \er{action} and \er{hard-action}, we have 
		\begin{equation}
		g^{-1}\xi_i = \la i | (g\cdot \Psi(x)) (g\cdot \Phi(y)) |i\ra = \la i | Q(x)^{-1} \Psi(x) g^{-1} g \Phi(y) P(y)^{-1} | i \ra = \xi_i p(y) q(x).
		\end{equation}
		Here $Q(x) \in B_-$ and $P(y) \in B_+$ are group elements for the normalizations from \er{action} and \er{hard-action}. Since $\la i |$ and $|i\ra$ are  the lowest and highest weight vectors, we have 
		$$
		\la i | Q(x)^{-1} = q(x) \la i |, \quad P(y)^{-1}| i \ra = p(y) |i \ra,
		$$
		for some functions $p(y)$ and $q(x)$. 
		Therefore the locally defined
		$$
		-(\log \xi_i)_{xy}
		$$
		is $G$-invariant. 
		
		Now we define
		\begin{equation}\label{solns}
		u_i = -\log \xi_i + \sum_{j=1}^\ell a^{ij} \log (\phi_j(y) \psi_j(x)),\quad 1\leq i\leq \ell.
		\end{equation}
		The $u_{i, xy}=-(\log \xi_i)_{xy}$ are also differential invariants. 
		
		Now we compute that 
		$$
		u_{i, xy} = - \frac{\xi_i \xi_{i,{xy}} - \xi_{i, x} \xi_{i, y}}{\xi_i^2}.
		$$
		By \er{def curve}, \er{def y-curve} and \er{ab}, we have
		\begin{gather*}
		\xi_{i, y} = \la i | \Psi(x) \Phi_y(y) |i\ra = \la i | \Psi(x)  \Phi(y) \sum_{j=1}^\ell \phi_j(y) e_{-\a_j} | i \ra = \phi_i(y) \la i | \Psi(x)  \Phi(y) e_{-\a_i} | i \ra,\\
		\xi_{i, x} = \la i | \Psi_x(x) \Phi(y) |i\ra = \la i | \sum_{j=1}^\ell \psi_j(x) e_{\a_j} \Psi(x)  \Phi(y) | i \ra = \psi_i(x) \la i | e_{\a_i} \Psi(x)  \Phi(y) | i \ra,\\
		\xi_{i,{xy}} = \phi_i(y)\psi_i(x) \la i | e_{\a_i} \Psi(x)  \Phi(y) e_{-\a_i} | i \ra.
		\end{gather*}
		Here we have used the well-known fact that for $j\neq i$ we have $e_{-\a_j}|i\ra =0$ and similarly $\la i | e_{\a_j} = 0$ (see \cite{LSbook}*{Eq. (1.4.19)}). Therefore 
		$$
		u_{i, xy} = - \phi_i(y) \psi_i(x)\frac{\Delta_i}{\xi_i^2},
		$$
		where 
		$$
		\Delta_i = \begin{vmatrix}
		\la i | \Psi(x)  \Phi(y) | i \ra & \la i | \Psi(x)  \Phi(y) e_{-\a_i} | i \ra\\
		\la i | e_{\a_i} \Psi(x)  \Phi(y) | i \ra & \la i | e_{\a_i} \Psi(x)  \Phi(y) e_{-\a_i} | i \ra
		\end{vmatrix}.
		$$
		Now the so-called Jacobi identity \cite{LSbook}*{Eq. (1.6.16)} asserts that 
		\begin{equation}\label{Jacobi}
		\Delta_i = \prod_{j\neq i} \xi_j^{-a_{ij}}.
		\end{equation}
		
		Therefore since $a_{ii}=2$, we have
		\begin{equation*}\label{my toda}
		u_{i, xy} = - \phi_i(y) \psi_i(x) \prod_{j=1}^\ell \xi_j^{-a_{ij}} = -\exp\bigg(\sum_{j=1}^\ell a_{ij} u_j\bigg)
		\end{equation*}
		by the definition of $u_i$ in \er{solns}. 
		This shows that the $u_j$ for $1\leq j\leq \ell$ are functions in the $u_{i, xy}$ and hence are differential invariants themselves. 
		
		Therefore, the following functions 
		\begin{equation}\label{ui0}
		u_i^0 = -\log \la i | \Psi^0 \Phi^0 |i\ra + \sum_{j=1}^\ell a^{ij} \log (\phi_j^0 \psi_j^0),\quad 1\leq i\leq \ell,
		\end{equation}
		on $J^1_x(N_+,\t D)\times J^1_y(N_-, D)$ are invariant under the $G$ action, and they satisfy the Toda field theory \er{toda}.
		\end{proof}

	It is interesting to note that the above differential invariants in \er{solns} are exactly the general local solutions of the Toda field theory \er{toda} constructed in \cites{LS, LSbook}. 

	
	
\begin{bibdiv}
\begin{biblist}

\bib{AFV}{article}{
   author={Anderson, Ian M.},
   author={Fels, Mark E.},
   author={Vassiliou, Peter J.},
   title={Superposition formulas for exterior differential systems},
   journal={Adv. Math.},
   volume={221},
   date={2009},
   number={6},
   pages={1910--1963},
   issn={0001-8708},
}

\bib{BBT}{book}{
   author={Babelon, Olivier},
   author={Bernard, Denis},
   author={Talon, Michel},
   title={Introduction to classical integrable systems},
   series={Cambridge Monographs on Mathematical Physics},
   publisher={Cambridge University Press},
   place={Cambridge},
   date={2003},
   pages={xii+602},
   isbn={0-521-82267-X},
}

\bib{Feher1}{article}{
   author={Balog, J.},
   author={Feh{\'e}r, L.},
   author={O'Raifeartaigh, L.},
   author={Forg{\'a}cs, P.},
   author={Wipf, A.},
   title={Toda theory and $\scr W$-algebra from a gauged WZNW point of view},
   journal={Ann. Physics},
   volume={203},
   date={1990},
   number={1},
   pages={76--136},
   issn={0003-4916},
}

\bib{BCG3}{book}{
   author={Bryant, R. L.},
   author={Chern, S. S.},
   author={Gardner, R. B.},
   author={Goldschmidt, H. L.},
   author={Griffiths, P. A.},
   title={Exterior differential systems},
   series={Mathematical Sciences Research Institute Publications},
   volume={18},
   publisher={Springer-Verlag, New York},
   date={1991},
   pages={viii+475},
   isbn={0-387-97411-3},
}

\bib{DZ}{article}{
   author={Doubrov, Boris},
   author={Zelenko, Igor},
   title={Geometry of curves in generalized flag varieties},
   journal={Transform. Groups},
   volume={18},
   date={2013},
   number={2},
   pages={361--383},
   issn={1083-4362},
}

\bib{DS}{article}{
   author={Drinfel{\cprime}d, V. G.},
   author={Sokolov, V. V.},
   title={Lie algebras and equations of Korteweg-de Vries type},
   conference={
      title={Current problems in mathematics, Vol. 24},
   },
   book={
      series={Itogi Nauki i Tekhniki},
      publisher={Akad. Nauk SSSR Vsesoyuz. Inst. Nauchn. i Tekhn. Inform.},
      place={Moscow},
   },
   date={1984},
   pages={81--180},
}

\bib{EF}{article}{
   author={Enriquez, Benjamin},
   author={Frenkel, Edward},
   title={Equivalence of two approaches to integrable hierarchies of KdV
   type},
   journal={Comm. Math. Phys.},
   volume={185},
   date={1997},
   number={1},
   pages={211--230},
   issn={0010-3616},
}

\bib{Feher2}{article}{
   author={Feh{\'e}r, L.},
   author={O'Raifeartaigh, L.},
   author={Ruelle, P.},
   author={Tsutsui, I.},
   author={Wipf, A.},
   title={Generalized Toda theories and $\scr W$-algebras associated with
   integral gradings},
   journal={Ann. Physics},
   volume={213},
   date={1992},
   number={1},
   pages={1--20},
   issn={0003-4916},
}

\bib{FF-Inv}{article}{
   author={Feigin, Boris},
   author={Frenkel, Edward},
   title={Kac-Moody groups and integrability of soliton equations},
   journal={Invent. Math.},
   volume={120},
   date={1995},
   number={2},
   pages={379--408},
   issn={0020-9910},
}

\bib{FF}{article}{
   author={Feigin, Boris},
   author={Frenkel, Edward},
   title={Integrals of motion and quantum groups},
   conference={
      title={Integrable systems and quantum groups},
      address={Montecatini Terme},
      date={1993},
   },
   book={
      series={Lecture Notes in Math.},
      volume={1620},
      publisher={Springer, Berlin},
   },
   date={1996},
   pages={349--418},
}

\bib{F5}{article}{
   author={Frenkel, Edward},
   title={Five lectures on soliton equations},
   conference={
      title={Surveys in differential geometry: integral systems [integrable
      systems]},
   },
   book={
      series={Surv. Differ. Geom.},
      volume={4},
      publisher={Int. Press, Boston, MA},
   },
   date={1998},
   pages={131--180},
}

\bib{FH}{book}{
   author={Fulton, William},
   author={Harris, Joe},
   title={Representation theory},
   series={Graduate Texts in Mathematics},
   volume={129},
   publisher={Springer-Verlag},
   place={New York},
   date={1991},
   pages={xvi+551},
   isbn={0-387-97527-6},
   isbn={0-387-97495-4},
}

\bib{Knapp}{book}{
   author={Knapp, Anthony W.},
   title={Lie groups beyond an introduction},
   series={Progress in Mathematics},
   volume={140},
   edition={2},
   publisher={Birkh\"auser Boston, Inc., Boston, MA},
   date={2002},
   pages={xviii+812},
   isbn={0-8176-4259-5},
}

\bib{K1}{article}{
   author={Kostant, Bertram},
   title={The principal three-dimensional subgroup and the Betti numbers of
   a complex simple Lie group},
   journal={Amer. J. Math.},
   volume={81},
   number={4},
   date={1959},
   pages={973--1032},
   issn={0002-9327},
}

\bib{K2}{article}{
   author={Kostant, Bertram},
   title={Lie group representations on polynomial rings},
   journal={Amer. J. Math.},
   volume={85},
   date={1963},
   pages={327--404},
   issn={0002-9327},
}

\bib{K3}{article}{
   author={Kostant, Bertram},
   title={Verma modules and the existence of quasi-invariant differential
   operators},
   conference={
      title={Non-commutative harmonic analysis (Actes Colloq.,
      Marseille-Luminy, 1974)},
   },
   book={
      publisher={Springer, Berlin},
   },
   date={1975},
   pages={101--128. Lecture Notes in Math., Vol. 466},
}

\bib{KosToda}{article}{
   author={Kostant, Bertram},
   title={The solution to a generalized Toda lattice and representation
   theory},
   journal={Adv. in Math.},
   volume={34},
   date={1979},
   number={3},
   pages={195--338},
   issn={0001-8708},
}

\bib{L}{article}{
   author={Leznov, A. N.},
   title={On complete integrability of a nonlinear system of partial
   differential equations in two-dimensional space},
   journal={Teoret. Mat. Fiz.},
   volume={42},
   date={1980},
   number={3},
   pages={225--229},
   issn={0564-6162},
}

\bib{LS}{article}{
   author={Leznov, A. N.},
   author={Saveliev, M. V.},
   title={Representation of zero curvature for the system of nonlinear
   partial differential equations $x_{\alpha ,z\bar z}={\rm
   exp}(kx)_{\alpha }$ and its integrability},
   journal={Lett. Math. Phys.},
   volume={3},
   date={1979},
   number={6},
   pages={489--494},
   issn={0377-9017},
}

\bib{LSbook}{book}{
   author={Leznov, A. N.},
   author={Saveliev, M. V.},
   title={Group-theoretical methods for integration of nonlinear dynamical
   systems},
   series={Progress in Physics},
   volume={15},
   note={Translated and revised from the Russian;
   Translated by D. A. Leites},
   publisher={Birkh\"auser Verlag},
   place={Basel},
   date={1992},
   pages={xviii+290},
   isbn={3-7643-2615-8},
}

\bib{LNW}{article}{
   author={Lin, Chang-Shou},
   author={Nie, Zhaohu},
   author={Wei, Juncheng},
   title={Classification of solutions to general Toda systems  with singular sources},
   eprint={arXiv: 1605.07759}
}

\bib{LWY}{article}{
   author={Lin, Chang-Shou},
   author={Wei, Juncheng},
   author={Ye, Dong},
   title={Classification and nondegeneracy of $SU(n+1)$ Toda system with
   singular sources},
   journal={Invent. Math.},
   volume={190},
   date={2012},
   number={1},
   pages={169--207},
   issn={0020-9910},
}

\bib{Liouville}{article}{
   author={Liouville, J.},
   title={Sur l'\'equation aux diff\'erences partielles $\frac{d^2}{dudv}\log\lambda \pm \frac{\lambda}{2a^2}=0$},
   journal={Math. Pures Appl.},
   volume={18},
   date={1853},
   pages={71--72},
}

\bib{N14}{article}{
   author={Nie, Zhaohu},
   title={On characteristic integrals of Toda field theories},
   journal={J. Nonlinear Math. Phys.},
   volume={21},
   date={2014},
   number={1},
   pages={120--131},
   issn={1402-9251},
}

\bib{N15}{article}{
   author={Nie, Zhaohu},
   title={Classification of solutions to Toda systems of types C and B with
   singular sources},
   journal={Calc. Var. Partial Differential Equations},
   volume={55},
   date={2016},
   number={3},
   pages={55:53},
   issn={0944-2669},
}

\bib{OP}{book}{
   author={Olver, Peter J.},
   title={Equivalence, invariants, and symmetry},
   publisher={Cambridge University Press, Cambridge},
   date={1995},
   pages={xvi+525},
   isbn={0-521-47811-1},
}

\bib{Yam}{article}{
   author={Yamaguchi, Keizo},
   title={Differential systems associated with simple graded Lie algebras},
   conference={
      title={Progress in differential geometry},
   },
   book={
      series={Adv. Stud. Pure Math.},
      volume={22},
      publisher={Math. Soc. Japan, Tokyo},
   },
   date={1993},
   pages={413--494},
}

\end{biblist}
\end{bibdiv}

\medskip
\end{document}